\theoremstyle{plain}
\newtheorem{theorem}{Theorem}[section]
\newtheorem{corollary}[theorem]{Corollary}
\newtheorem{definition}[theorem]{Definition}
\newtheorem{lemma}[theorem]{Lemma}
\newtheorem{proposition}[theorem]{Proposition}
\newtheorem{remark}[theorem]{Remark}
\newtheorem*{theorem*}{Theorem}
\numberwithin{equation}{section}
\newcommand{\diff}{\mathop{}\!\mathrm{d}}
\DeclareMathOperator\supp{supp}
\title{Global regularity for solutions of the Navier--Stokes equation sufficiently close to being eigenfunctions of the Laplacian}
\author[1]{Evan Miller}
\affil[1]{McMaster University, Department of Mathematics

millee14@mcmaster.ca}
\begin{document}

\maketitle

\begin{abstract}
In this paper, we will prove a new, scale critical regularity criterion for solutions of the Navier--Stokes equation that are sufficiently close to being eigenfunctions of the Laplacian.
This estimate improves previous regularity criteria requiring control on the $\dot{H}^\alpha$ norm of $u,$ with $2\leq \alpha<\frac{5}{2},$ to a regularity criterion requiring control on the $\dot{H}^\alpha$ norm multiplied by the deficit in the interpolation inequality for the embedding of 
$\dot{H}^{\alpha-2}\cap\dot{H}^{\alpha} 
\hookrightarrow \dot{H}^{\alpha-1}.$
This regularity criterion suggests, at least heuristically, the possibility of some relationship between potential blowup solutions of the Navier--Stokes equation and the Kolmogorov-Obhukov spectrum in the theory of turbulence.
\end{abstract}

\section{Introduction}

The Navier--Stokes equation is one of the fundamental equations of fluid mechanics. For an incompressible fluid, where the density of the fluid in question is constant, the Navier--Stokes equation with no external forces is given by 
\begin{align}
    \partial_t u-\Delta u + P_{df}\left((u\cdot\nabla) u\right)&=0, \\
    \nabla \cdot u&=0,
\end{align}
where $u\in\mathbb{R}^3$ is the velocity of the fluid and $P_{df}$ is the projection onto the space of divergence free vector fields.
We have also taken the viscosity to be $1,$ which we can do without loss of generality, because it is equivalent up to rescaling. 
In his foundational paper on the subject, Leray proved the global existence of weak solutions to the Navier--Stokes equation satisfying an energy inequality \cite{Leray}; however, such solutions are not known to be either smooth or unique. 
This is because the bounds from the energy inequality
are both supercritical with respect to scaling.
Smooth solutions of the Navier--Stokes equation with initial data in $H^1$ must satisfy an energy equality, 
stating that for all $t>0,$
\begin{equation} \label{EnergyEquality}
    \frac{1}{2}\|u(\cdot,t)\|_{L^2}^2
    +\int_0^t \|\nabla u(\cdot,\tau)\|_{L^2}^2 \diff\tau
    =\frac{1}{2}\left\|u^0\right\|_{L^2}^2.
\end{equation}
The Navier--Stokes equation is invariant under the rescaling,
\begin{equation} \label{rescaling}
    u^\lambda(x,t)=\lambda u\left(\lambda x, \lambda^2 t\right),
\end{equation}
for all $\lambda>0$, and it is a simple calculation to check that the bounds on $u$ in $L^\infty_t L^2_x$ and $L^2_t \dot{H}^1_x$ from the energy equality \eqref{EnergyEquality} are supercritical with respect to the rescaling \eqref{rescaling}.

The only results presently available guaranteeing regularity for the Navier--Stokes equation with general, arbitrarily large initial data,
require control of some scale critical quantity. 
Ladyzhenskaya \cite{Ladyzhenskaya}, Prodi \cite{Prodi}, and Serrin \cite{Serrin}, proved that if a smooth solution of the Navier--Stokes equation blows up in finite time $T_{max}<+\infty,$ then
for all $3<q\leq+\infty, \frac{2}{p}+\frac{3}{q}=1,$
\begin{equation}
    \int_0^{T_{max}}\|u(\cdot,t)\|_{L^q}^p \diff t=+\infty.
\end{equation}
It is straightforward to check that $L^p_t L^q_x$ is scale critical with respect to the rescaling \eqref{rescaling}, when $\frac{2}{p}+\frac{3}{q}=1.$
This result was then extended by Escauriaza, Seregin, and \v{S}ver\'ak \cite{SereginL3} to the endpoint case $p=+ \infty, q=3.$ They proved that if a smooth solution $u$ of the Navier-Stokes equation blows up in finite time $T_{max}<+\infty$, then 
\begin{equation}
\limsup_{t \to T_{max}}\|u(\cdot,t)\|_{L^3(\mathbb{R}^3)}=+\infty.
\end{equation}

There have been a number of generalizations of the Ladyzhenskaya-Prodi-Serrin regularity criterion, including scale critical component reduction results involving the vorticity \cite{BealeKatoMajda}, two components of the vorticity \cite{ChaeVort}, the derivative in just one direction, $\frac{\partial u}{\partial x_i}$ \cite{Kukavica}, and involving only one component, $u_j$ \cites{Chemin1,Chemin2}.
The Ladyzhenskaya-Prodi-Serrin regularity criterion has also been generalized to endpoint Besov spaces
\cites{ChenZhangBesov,KOTbesov,KozonoShimadaBesov},
while the Escauriaza-Seregin-\v{S}ver\'ak regularity criteriona has been generalized to all non-endpoint Besov spaces \cites{AlbrittonBesov,GKPbesov,AlbrittonBarker}.

In this paper, we will generalize the Ladyzhenskaya-Prodi-Serrin regularity criterion to solutions of the Navier--Stokes equation that are close to being eigenfunctions of the Laplacian. One tool we will use is the notion of mild solutions, which was developed by Kato and Fujita \cite{Kato}.
\begin{definition}
\label{MildSolutions}
Suppose $u \in C\left([0,T);\dot{H}^1 
\left(\mathbb{R}^3\right) \right ),
\nabla\cdot u=0.$ 
Then $u$ is a mild solution to the Navier--Stokes equation if for all $0<t<T$,
\begin{equation}
u(\cdot,t)=e^{t \Delta}u^0
+\int_0^t e^{(t-\tau)\Delta}
P_{df}\left(-(u \cdot \nabla)u\right)
(\cdot, \tau)\diff \tau,
\end{equation}
where $e^{t\Delta}$ is the heat semi-group operator given by convolution with the heat kernel; that is to say, $e^{t\Delta}u^0$ is the solution of the heat equation after time $t,$ with initial data $u^0.$ 
\end{definition}
Fujita and Kato proved the local existence in time of mild solutions to the Navier--Stokes equation for all initial data in $\dot{H}^1\left(\mathbb{R}^3\right).$ Our results for solutions of the Navier--Stokes equation sufficiently close to being eigenfunctions of the Laplacian will be proven in terms of $H^1$ mild solutions.
We will also define, for all $\alpha>-\frac{3}{2},$ the homogeneous Sobolev space $\dot{H}^\alpha\left(\mathbb{R}^3\right)$,
which is a Hilbert space for $-\frac{3}{2}<\alpha<\frac{3}{2}$,
as the space with the norm
\begin{align}
    \|f\|_{\dot{H}^\alpha}^2
    &=
    \left\|(-\Delta)^{\frac{\alpha}{2}}f\right\|_{L^2}^2\\
    &=
    \int_{\mathbb{R}^3} \left( 2 \pi |\xi|\right)^{2\alpha}
    \big|\hat{f}(\xi)\big|^2 \diff\xi,
\end{align}
and the inhomogeneous Hilbert space
$H^\alpha\left(\mathbb{R}^3\right)$ 
as the space with the norm
\begin{equation}
    \|f\|_{H^\alpha}^2=
    \int_{\mathbb{R}^3} \left(1+4\pi^2|\xi|^2 \right)^{\alpha}
    \big|\hat{f}(\xi)\big|^2 \diff\xi,
\end{equation}
while further noting that for all $\alpha>0,$
\begin{equation}
    H^\alpha=L^2 \cap \dot{H}^\alpha.
\end{equation}
For solutions of the Navier--Stokes equation, we call 
$\frac{1}{2}\|u(\cdot,t)\|_{L^2}^2$ the energy and
$\frac{1}{2}\|u(\cdot,t)\|_{\dot{H}^1}^2$ the enstrophy.
With mild solutions and the relevant Sobolev spaces defined, we can now state the main theorem of this paper.

\begin{theorem} \label{MainTheoremIntro}
Suppose $u\in C\left(\left[0,T_{max}\right);H^1\right)$
is a mild solution of the Navier--Stokes equation, and suppose $\frac{6}{5}<q\leq 3, \frac{2}{p}+\frac{3}{q}=3.$ Then for all $0<t<T_{max}$
\begin{equation} \label{GrowthBound}
    \|\nabla u(\cdot,t)\|_{L^2}^2\leq 
    \left\|\nabla u^0\right\|_{L^2}^2
    \exp\left(C_q \int_0^t
    \inf_{\lambda(\tau)\in\mathbb{R}}
    \|-\Delta u-\lambda u\|_{L^q}^p \diff\tau \right),
\end{equation}
where $C_q>0$ depends only on $q.$
In particular, if $T_{max}<+\infty$ then
\begin{equation}
    \int_0^{T_{max}} \inf_{\lambda(t)\in\mathbb{R}}
    \|-\Delta u-\lambda u\|_{L^q}^p \diff t
    =+\infty.
\end{equation}
\end{theorem}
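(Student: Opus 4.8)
The plan is to run the classical enstrophy ($\dot H^1$) energy estimate for the Navier--Stokes equation, but to exploit the exact cancellation $\langle (u\cdot\nabla)u,u\rangle_{L^2}=0$ so that the balance between dissipation and the transport term can be rewritten with an arbitrary multiple $\lambda u$ subtracted inside the transport term, and then to optimize over $\lambda$ only at the very end. \emph{Step 1: the enstrophy identity.} By the local-in-time theory of mild solutions of Fujita and Kato together with parabolic smoothing, $u$ is smooth on $\mathbb{R}^3\times(0,T_{max})$ with $u(\cdot,t)\in H^s$ for every $s$ and every $t\in(0,T_{max})$, which justifies differentiating $\tfrac12\|\nabla u(\cdot,t)\|_{L^2}^2$ and pairing the equation with $-\Delta u$; the initial data is recovered at the end by letting $t\to0^+$ and using $u\in C([0,T_{max});H^1)$. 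Since $u$ is divergence free so is $-\Delta u$, hence $P_{df}$ may be dropped against it, and one obtains for each $t$
\[
\tfrac12\tfrac{d}{dt}\|\nabla u\|_{L^2}^2+\|\Delta u\|_{L^2}^2=\langle (u\cdot\nabla)u,\Delta u\rangle_{L^2}.
\]

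\emph{Step 2: inserting the eigenfunction deficit and interpolating.} Because $\nabla\cdot u=0$ we have $\langle(u\cdot\nabla)u,u\rangle_{L^2}=\tfrac12\int_{\mathbb{R}^3}(u\cdot\nabla)|u|^2=0$, so for every $\lambda\in\mathbb{R}$ the right-hand side above equals $\langle(u\cdot\nabla)u,\Delta u+\lambda u\rangle_{L^2}$, which by Hölder's inequality is bounded by $\|u\|_{L^{r_1}}\|\nabla u\|_{L^{r_2}}\|{-\Delta u}-\lambda u\|_{L^q}$ for any triple with $\tfrac1{r_1}+\tfrac1{r_2}+\tfrac1q=1$. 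I would choose $r_1\in[6,\infty)$ and $r_2\in[2,6]$ with $\tfrac1{r_1}+\tfrac1{r_2}=1-\tfrac1q$; such a pair exists precisely when $\tfrac65<q\le3$, which is the source of the hypothesis on $q$. Gagliardo--Nirenberg (Sobolev embedding after Sobolev interpolation between $\dot H^1$ and $\dot H^2$) gives $\|u\|_{L^{r_1}}\lesssim\|\nabla u\|_{L^2}^{1-a}\|\Delta u\|_{L^2}^{a}$ and $\|\nabla u\|_{L^{r_2}}\lesssim\|\nabla u\|_{L^2}^{1-b}\|\Delta u\|_{L^2}^{b}$ with $a=\tfrac12-\tfrac3{r_1}$, $b=\tfrac32-\tfrac3{r_2}$; the constraint on $(r_1,r_2)$ forces $a+b=\tfrac3q-1$ independently of the split, so
\[
\langle(u\cdot\nabla)u,\Delta u+\lambda u\rangle_{L^2}\lesssim\|\nabla u\|_{L^2}^{\,3-3/q}\,\|\Delta u\|_{L^2}^{\,3/q-1}\,\|{-\Delta u}-\lambda u\|_{L^q}.
\]

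\emph{Step 3: Young, Grönwall, and the conclusion.} For $\tfrac65<q<3$ one has $\tfrac3q-1\in(0,2)$, so Young's inequality with exponent $\tfrac{2q}{3-q}$ on the factor $\|\Delta u\|_{L^2}^{3/q-1}$ absorbs it into $\|\Delta u\|_{L^2}^2$; the dual exponent is exactly $p=\tfrac{2q}{3(q-1)}$, which is the Ladyzhenskaya--Prodi--Serrin exponent solving $\tfrac2p+\tfrac3q=3$, and it converts the remaining $\|\nabla u\|_{L^2}^{3-3/q}$ into precisely $\|\nabla u\|_{L^2}^2$. This yields $\tfrac{d}{dt}\|\nabla u\|_{L^2}^2\le C_q\|{-\Delta u}-\lambda u\|_{L^q}^{p}\|\nabla u\|_{L^2}^2$ for every $\lambda$; replacing the right side by its infimum over $\lambda$ and integrating the differential inequality gives \eqref{GrowthBound}. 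The endpoint $q=3$ is easier: there $a=b=0$, $p=1$, and one bounds the transport term directly by $\|u\|_{L^6}\|\nabla u\|_{L^2}\|{-\Delta u}-\lambda u\|_{L^3}\lesssim\|\nabla u\|_{L^2}^2\|{-\Delta u}-\lambda u\|_{L^3}$ and simply discards the dissipation. The blow-up statement then follows because an $H^1$ mild solution whose enstrophy stays bounded on $[0,T_{max})$ can be continued past $T_{max}$, so if $T_{max}<+\infty$ the exponential in \eqref{GrowthBound} must be infinite.

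The place that needs the most care is the bookkeeping in Steps 2--3: checking that the two Gagliardo--Nirenberg exponents are forced to satisfy $a+b=\tfrac3q-1$ (this is exactly what makes the estimate scale-critical and lets everything close), that the Young step returns precisely the power $2$ on $\|\nabla u\|_{L^2}$ and the power $p$ on the deficit $\|{-\Delta u}-\lambda u\|_{L^q}$, and that requiring an admissible Hölder triple with $r_1<\infty$ and $r_2\in[2,6]$ pins the range of $q$ as exactly $(\tfrac65,3]$. By contrast, the genuinely new ingredient is elementary: the transport term is unchanged when $\Delta u$ is replaced by $\Delta u+\lambda u$, which is precisely why the resulting criterion only sees how far $u$ is from being an eigenfunction of the Laplacian rather than the full size of $\Delta u$.
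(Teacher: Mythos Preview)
Your proposal is correct and follows essentially the same route as the paper: the paper also derives the enstrophy identity, inserts $\lambda u$ via the cancellation $\langle(u\cdot\nabla)u,u\rangle=0$, applies H\"older with a triple $(q,a,b)$, uses Sobolev plus $\dot H^1$--$\dot H^2$ interpolation to reach $\|\nabla u\|_{L^2}^{3-3/q}\|\Delta u\|_{L^2}^{3/q-1}\|{-\Delta u}-\lambda u\|_{L^q}$, absorbs the $\Delta u$ factor by Young, and closes with Gr\"onwall (treating $q=3$ separately). The only cosmetic difference is that the paper fixes one specific H\"older split $\tfrac1a=\tfrac{5}{18}-\tfrac{1}{3q}$, $\tfrac1b=\tfrac{13}{18}-\tfrac{2}{3q}$, whereas you observe that any admissible $(r_1,r_2)$ works because $a+b=\tfrac3q-1$ is forced; this is the same argument.
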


An eigenfunction of the Laplacian satisfies the equation,
\begin{equation}
    -\Delta u-\lambda u=0.
\end{equation}
There are no nonzero eigenfunctions of the Laplacian in
$H^1\left(\mathbb{R}^3\right),$ because if $-\Delta u=\lambda u,$ that would require its Fourier transform to be supported on a set of measure zero, specifically the set
\begin{equation}
    \left\{\xi\in\mathbb{R}^3: 4\pi^2|\xi|^2=\lambda\right\};
\end{equation} 
however, the quantity 
\begin{equation}
   \int_0^t \inf_{\lambda(\tau) \in\mathbb{R}}
\|-\Delta u-\lambda u\|_{L^q}^p \diff\tau
\end{equation} 
is nonetheless a scale critical measure of how close a solution is to being an eigenfunction of the Laplacian. This quantity is scale invariant because this infimum scales the same way as $-\Delta u,$ the quantity with no parameter in the infimum, and we can see from the scale invariance \eqref{rescaling}, that $-\Delta u$ has the scale invariance
\begin{equation}
    -\Delta u^\lambda(x,t)=
    -\lambda^3\Delta u \left(\lambda x, \lambda^2 t\right).
\end{equation}
It is a simple calculation to observe that when $\frac{2}{p}+\frac{3}{q}=3,$ the space $L^p_t L^q_x$ is invariant under this rescaling.

We do not have a nice expression for the quantity
\begin{equation}
    \inf_{\lambda \in\mathbb{R}}
    \|-\Delta u-\lambda u\|_{L^q},
\end{equation}
in general $L^q$ spaces, however in $L^2$ we can use the Hilbert space structure to calculate this quantity explicitly, which allows us to obtain the following result.

\begin{corollary} \label{Cor1Intro}
Suppose $u\in C\left(\left[0,T_{max}\right);H^1\right)$
is a mild solution of the Navier--Stokes equation. Then for all $0<t<T_{max}$
\begin{equation} 
    \|\nabla u(\cdot,t)\|_{L^2}^2\leq 
    \left\|\nabla u^0\right\|_{L^2}^2
    \exp\left(C_2 \int_0^t
    \|-\Delta u\|_{L^2}^\frac{4}{3}
    \left(1-\frac{\|\nabla u\|_{L^2}^4}
    {\|u\|_{L^2}^2 \|-\Delta u\|_{L^2}^2}
    \right)^\frac{2}{3} \diff\tau \right),
\end{equation}
where $C_2>0$ is taken as in Theorem \ref{MainTheoremIntro}.
In particular, if $T_{max}<+\infty$ then
\begin{equation}
    \int_0^{T_{max}} \|-\Delta u\|_{L^2}^\frac{4}{3}
    \left(1-\frac{\|\nabla u\|_{L^2}^4}
    {\|u\|_{L^2}^2 \|-\Delta u\|_{L^2}^2}
    \right)^\frac{2}{3} \diff t
    =+\infty.
\end{equation}
\end{corollary}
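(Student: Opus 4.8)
The plan is to derive Corollary \ref{Cor1Intro} directly from Theorem \ref{MainTheoremIntro}, specialized to $q=2$. With $q=2$ the scaling relation $\frac{2}{p}+\frac{3}{q}=3$ forces $p=\frac{4}{3}$, and $q=2$ lies in the admissible range $\frac{6}{5}<q\leq 3$, so the theorem applies. The whole content of the corollary is then the explicit evaluation of the infimum $\inf_{\lambda\in\mathbb{R}}\|-\Delta u-\lambda u\|_{L^2}$, which is possible precisely because $L^2$ carries a Hilbert space structure.

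Fix $t$ with $0<t<T_{max}$ and assume $u\not\equiv 0$ (otherwise the statement is trivial), so that $\|u(\cdot,t)\|_{L^2}>0$; moreover, since $H^1$ mild solutions are instantaneously smooth for $t>0$, we have $u(\cdot,t)\in\dot{H}^2$ and may work with $\Delta u(\cdot,t)\in L^2$. Expanding the square and using the identity $\langle -\Delta u,u\rangle=\|\nabla u\|_{L^2}^2$, which follows from integration by parts (equivalently, from Plancherel on the Fourier side), gives
\[
  \|-\Delta u-\lambda u\|_{L^2}^2
  =\|-\Delta u\|_{L^2}^2-2\lambda\|\nabla u\|_{L^2}^2+\lambda^2\|u\|_{L^2}^2 .
\]
As a quadratic in $\lambda$ with positive leading coefficient $\|u\|_{L^2}^2$, this is minimized at $\lambda_\ast=\|\nabla u\|_{L^2}^2/\|u\|_{L^2}^2$, with minimum value
\[
  \|-\Delta u\|_{L^2}^2-\frac{\|\nabla u\|_{L^2}^4}{\|u\|_{L^2}^2}
  =\|-\Delta u\|_{L^2}^2\left(1-\frac{\|\nabla u\|_{L^2}^4}{\|u\|_{L^2}^2\,\|-\Delta u\|_{L^2}^2}\right).
\]

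By the interpolation inequality $\|\nabla u\|_{L^2}^2\leq\|u\|_{L^2}\|-\Delta u\|_{L^2}$ — itself just Cauchy--Schwarz applied to $\int_{\mathbb{R}^3}(2\pi|\xi|)^2\,\big|\hat u(\xi)\big|^2\diff\xi$ — the parenthesized deficit is nonnegative, so raising the displayed minimum to the power $\frac{p}{2}=\frac{2}{3}$ is legitimate and yields
\[
  \inf_{\lambda\in\mathbb{R}}\|-\Delta u-\lambda u\|_{L^2}^{\frac{4}{3}}
  =\|-\Delta u\|_{L^2}^{\frac{4}{3}}\left(1-\frac{\|\nabla u\|_{L^2}^4}{\|u\|_{L^2}^2\,\|-\Delta u\|_{L^2}^2}\right)^{\frac{2}{3}} .
\]
Substituting this identity into the growth bound \eqref{GrowthBound} with $q=2$ and $C_2$ the constant of Theorem \ref{MainTheoremIntro} produces the stated growth bound, and letting $t\to T_{max}$ gives the blowup criterion exactly as in the theorem. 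There is no genuinely hard step here; the only points requiring a word of justification are the identity $\langle-\Delta u,u\rangle=\|\nabla u\|_{L^2}^2$, the finiteness of $\|-\Delta u(\cdot,t)\|_{L^2}$ for positive times by smoothing of mild solutions, and the nonnegativity of the interpolation deficit, which is what makes the fractional power well defined.
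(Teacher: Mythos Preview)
Your proof is correct and follows essentially the same approach as the paper: specialize Theorem \ref{MainTheoremIntro} to $q=2$, $p=\frac{4}{3}$, expand $\|-\Delta u-\lambda u\|_{L^2}^2$ as a quadratic in $\lambda$ using $\langle -\Delta u,u\rangle=\|\nabla u\|_{L^2}^2$, minimize at $\lambda_\ast=\|\nabla u\|_{L^2}^2/\|u\|_{L^2}^2$, and raise to the $\tfrac{2}{3}$ power. The paper separates the infimum computation into its own Proposition (Proposition \ref{InfCalc}) but the argument is otherwise identical; your additional remarks on smoothing and on the nonnegativity of the deficit are helpful clarifications the paper leaves implicit.
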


More generally, we can use the Sobolev embedding of $\dot{H}^\beta \hookrightarrow L^q$ to generalize Corollary \ref{Cor1Intro} in terms of homogeneous Sobolev spaces.

\begin{corollary} \label{Cor2Intro}
Suppose $u\in C\left(\left[0,T_{max}\right);H^1\right)$
is a mild solution of the Navier--Stokes equation, and suppose $2\leq \alpha\leq \frac{5}{2}, 
\alpha=\frac{1}{2}+\frac{2}{p}.$ 
Then for all $0<t<T_{max}$
\begin{equation} 
    \|\nabla u(\cdot,t)\|_{L^2}^2\leq 
    \left\|\nabla u^0\right\|_{L^2}^2
    \exp\left(\Tilde{C}_\alpha \int_0^t
    \|u\|_{\dot{H}^\alpha}^p 
    \left(1-\frac{\|u\|_{\dot{H}^{\alpha-1}}^4}
    {\|u\|_{\dot{H}^{\alpha-2}}^2 \|u\|_{\dot{H}^\alpha}^2}
    \right)^\frac{p}{2} \diff\tau \right),
\end{equation}
where $\Tilde{C}_\alpha>0$ depends only on $\alpha.$
In particular, if $T_{max}<+\infty$ then
\begin{equation}
    \int_0^{T_{max}}
    \|u\|_{\dot{H}^\alpha}^p 
    \left(1-\frac{\|u\|_{\dot{H}^{\alpha-1}}^4}
    {\|u\|_{\dot{H}^{\alpha-2}}^2 \|u\|_{\dot{H}^\alpha}^2}
    \right)^\frac{p}{2} \diff t
    =+\infty.
\end{equation}
Note that the scaling relation between $\alpha$ and $p$ can alternatively be expressed by
\begin{equation}
    p=\frac{2}{\alpha-\frac{1}{2}}
\end{equation}
\end{corollary}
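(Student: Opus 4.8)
The plan is to obtain Corollary~\ref{Cor2Intro} directly from Theorem~\ref{MainTheoremIntro}, by combining a Sobolev embedding with the explicit $L^2$ minimization already used for Corollary~\ref{Cor1Intro}. First I would do the bookkeeping of exponents. The hypothesis $\alpha=\frac{1}{2}+\frac{2}{p}$ means $\frac{2}{p}=\alpha-\frac{1}{2}$, so the scaling relation $\frac{2}{p}+\frac{3}{q}=3$ of Theorem~\ref{MainTheoremIntro} forces $\frac{3}{q}=\frac{7}{2}-\alpha$, that is $q=\frac{6}{7-2\alpha}$. As $\alpha$ ranges over $[2,\frac{5}{2}]$ this $q$ ranges over $[2,3]\subset(\frac{6}{5},3]$ and $p$ ranges over $[1,\frac{4}{3}]$, so Theorem~\ref{MainTheoremIntro} applies with this pair $(p,q)$, and it suffices to prove the pointwise-in-time estimate
\begin{equation}
\inf_{\lambda\in\mathbb{R}}\|-\Delta u-\lambda u\|_{L^q}^p
\le S_\alpha^p\,\|u\|_{\dot{H}^\alpha}^p
\left(1-\frac{\|u\|_{\dot{H}^{\alpha-1}}^4}
{\|u\|_{\dot{H}^{\alpha-2}}^2\,\|u\|_{\dot{H}^\alpha}^2}\right)^{\!p/2}
\end{equation}
for a constant $S_\alpha$ depending only on $\alpha$, and then set $\tilde{C}_\alpha:=C_q S_\alpha^p$.

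Next I would set $\beta:=\alpha-2$ and note that the relation $\beta=\frac{3}{2}-\frac{3}{q}$ found above is exactly the scaling condition for the Sobolev embedding $\dot{H}^\beta(\mathbb{R}^3)\hookrightarrow L^q(\mathbb{R}^3)$; since $\beta\in[0,\frac{1}{2}]$ this embedding holds with a constant $S_\alpha$ depending only on $\alpha$. Applying it to $-\Delta u-\lambda u$ for each fixed $\lambda$ and then taking the infimum gives $\inf_\lambda\|-\Delta u-\lambda u\|_{L^q}\le S_\alpha\inf_\lambda\|-\Delta u-\lambda u\|_{\dot{H}^{\alpha-2}}$. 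Since $(-\Delta)^{(\alpha-2)/2}$ is a Fourier multiplier it commutes with $-\Delta$ and with multiplication by the scalar $\lambda$, so writing $v:=(-\Delta)^{(\alpha-2)/2}u$ we have $\|-\Delta u-\lambda u\|_{\dot{H}^{\alpha-2}}=\|-\Delta v-\lambda v\|_{L^2}$, together with $\|v\|_{L^2}=\|u\|_{\dot{H}^{\alpha-2}}$, $\|\nabla v\|_{L^2}=\|u\|_{\dot{H}^{\alpha-1}}$, and $\|-\Delta v\|_{L^2}=\|u\|_{\dot{H}^\alpha}$.

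The remaining step is the elementary Hilbert-space minimization already behind Corollary~\ref{Cor1Intro}: expanding $\|-\Delta v-\lambda v\|_{L^2}^2=\|-\Delta v\|_{L^2}^2-2\lambda\|\nabla v\|_{L^2}^2+\lambda^2\|v\|_{L^2}^2$ (using $\langle-\Delta v,v\rangle=\|\nabla v\|_{L^2}^2$) and minimizing this quadratic in $\lambda$ at $\lambda=\|\nabla v\|_{L^2}^2/\|v\|_{L^2}^2$ gives
\begin{equation}
\inf_{\lambda\in\mathbb{R}}\|-\Delta v-\lambda v\|_{L^2}^2
=\|-\Delta v\|_{L^2}^2\left(1-\frac{\|\nabla v\|_{L^2}^4}{\|v\|_{L^2}^2\,\|-\Delta v\|_{L^2}^2}\right)
=\|u\|_{\dot{H}^\alpha}^2\left(1-\frac{\|u\|_{\dot{H}^{\alpha-1}}^4}{\|u\|_{\dot{H}^{\alpha-2}}^2\,\|u\|_{\dot{H}^\alpha}^2}\right).
\end{equation}
Combining the last three observations, raising to the power $p/2$, and substituting into \eqref{GrowthBound} yields the claimed growth bound; the divergence of the integral when $T_{max}<+\infty$ is then immediate from the corresponding divergence in Theorem~\ref{MainTheoremIntro} together with this same pointwise bound.

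There is no real analytic difficulty here---the content is entirely in matching the scaling and Sobolev exponents---so the point to be careful about is the regularity bookkeeping. An $H^1$ mild solution need not a priori lie in $\dot{H}^\alpha$ for $\alpha>1$, so one should either invoke the instantaneous smoothing of mild solutions to give meaning to $\|u(\cdot,t)\|_{\dot{H}^{\alpha-1}}$ and $\|u(\cdot,t)\|_{\dot{H}^\alpha}$ for $t>0$, or simply note that the asserted inequality is vacuous at any time where the right-hand side integrand is infinite; in any case $u\in H^1$ already gives $u\in\dot{H}^{\alpha-2}$ since $\alpha-2\in[0,\frac{1}{2}]$, and the interpolation inequality $\|u\|_{\dot{H}^{\alpha-1}}^2\le\|u\|_{\dot{H}^{\alpha-2}}\|u\|_{\dot{H}^\alpha}$ keeps the deficit factor in $[0,1]$ whenever the three norms are finite. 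Finally I would double-check the degenerate endpoints: $\alpha=2$ gives $\dot{H}^{\alpha-2}=L^2$ with the embedding the identity, recovering Corollary~\ref{Cor1Intro}, while $u\equiv0$ is trivial.
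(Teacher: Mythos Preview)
Your proposal is correct and follows essentially the same route as the paper: match the exponents so that $\beta=\alpha-2=\frac{3}{2}-\frac{3}{q}$ and $\frac{2}{p}+\frac{3}{q}=3$, apply the Sobolev embedding $\dot{H}^{\alpha-2}\hookrightarrow L^q$ to $-\Delta u-\lambda u$, compute the resulting Hilbert-space infimum explicitly, and invoke Theorem~\ref{MainTheoremIntro}. The only cosmetic difference is that the paper records the $\dot{H}^\beta$ minimization as a separate proposition, whereas you reduce it to the $L^2$ case via $v=(-\Delta)^{(\alpha-2)/2}u$; the content is identical.
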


We will note here that the $\alpha=2$ case of Corollary \ref{Cor2Intro}, is precisely Corollary \ref{Cor1Intro}. For $2<\alpha \leq \frac{5}{2},$ Corollary \ref{Cor2Intro} requires that we use the fractional Sobolev inequality to bound the infimum in $L^q$ by an infimum in the appropriate homogeneous Hilbert space, which can be calculated explicitly. We will show that
\begin{align}
    \inf_{\lambda \in\mathbb{R}}
    \|-\Delta u-\lambda u\|_{L^q}^2
    &\leq
    C \inf_{\lambda \in\mathbb{R}}
    \|-\Delta u-\lambda u\|_{\dot{H}^{\alpha-2}}^2\\
    &=
    C \|u\|_{\dot{H}^\alpha}^2 
    \left(1-\frac{\|u\|_{\dot{H}^{\alpha-1}}^4}
    {\|u\|_{\dot{H}^{\alpha-2}}^2 \|u\|_{\dot{H}^\alpha}^2}
    \right),
\end{align}
where $\alpha-2= \frac{3}{2}-\frac{3}{q}$.

We will also note that without the term $\left(1-\frac{\|u\|_{\dot{H}^{\alpha-1}}^4}
{\|u\|_{\dot{H}^{\alpha-1}}^2\|u\|_{\dot{H}^{\alpha-1}}^2}
\right)^\frac{p}{2},$  
Corollary \ref{Cor2Intro} is an immediate corollary of a variant of the Ladyzhenskaya-Prodi-Serrin regularity criterion.
Corollary \ref{Cor2Intro} shows that our regularity criterion for solutions of the Navier--Stokes equation sufficiently close to being eigenfunctions of the Laplacian measures the deficit in the interpolation inequality for the embedding 
\begin{equation}
    \dot{H}^{\alpha-2} \cap \dot{H}^{\alpha}
\hookrightarrow \dot{H}^{\alpha-1},
\end{equation}
which states that
\begin{equation} \label{HilbertInterpolate}
    \|f\|_{\dot{H}^{\alpha-1}}^2\leq
    \|f\|_{\dot{H}^\alpha}\|f\|_{\dot{H}^{\alpha-2}},
\end{equation}
where the constant $1$ is sharp but not attained, because there are no nonzero eigenfunctions of the Laplacian in 
$\dot{H}^{\alpha-1}\left(\mathbb{R}^3\right).$ 
When the inequality \eqref{HilbertInterpolate} is close to being saturated, then the quantity
\begin{equation}
    \left(1-\frac{\|u\|_{\dot{H}^{\alpha-1}}^4}
    {\|u\|_{\dot{H}^{\alpha-2}}^2 \|u\|_{\dot{H}^\alpha}^2}
    \right)^\frac{p}{2}
\end{equation}
will be small, so Corollary \ref{Cor2Intro} limits the extent to which blowup solutions can saturate the interpolation inequality \eqref{HilbertInterpolate}.

We will also prove that finite-time blowup solutions cannot concentrate on arbitrarily narrow bands in Fourier space, supported between an inner radius of $R_1(t)$ and an outer radius of $R_2(t),$ 
with the ratio $\frac{R_1(t)}{R_2(t)} \to 1$ 
arbitrarily quickly as $t \to T_{max}$ relative to the 
size of $\|u(\cdot,t)\|_{\dot{H}^\alpha}^p$.

\begin{corollary} \label{NarrowBandsRegCritIntro}
Suppose $u\in C\left(\left[0,T_{max}\right);H^1\right)$
is a mild solution of the Navier--Stokes equation,
and suppose for all $0<t<T_{max}$
\begin{equation}
    \supp{ \hat{u}(t)} \subset \left\{ \xi\in\mathbb{R}^3:
    R_1(t) \leq |\xi| \leq R_2(t) \right\}.
\end{equation}
Let $2\leq \alpha\leq \frac{5}{2}, 
\alpha=\frac{1}{2}+\frac{2}{p}.$ 
Then for all $0<t<T_{max}$
\begin{equation} 
    \|\nabla u(\cdot,t)\|_{L^2}^2\leq 
    \left\|\nabla u^0\right\|_{L^2}^2
    \exp\left(\Tilde{C}_\alpha \int_0^t
    \|u\|_{\dot{H}^\alpha}^p 
    \left(1-\frac{R_1(\tau)^4}{R_2(\tau))^4}
    \right)^\frac{p}{2} \diff\tau \right),
\end{equation}
where $\Tilde{C}_\alpha>0$ depends only on $\alpha.$
In particular, if $T_{max}<+\infty$ then
\begin{equation}
    \int_0^{T_{max}}
    \|u\|_{\dot{H}^\alpha}^p 
    \left(1-\frac{R_1(t)^4}{R_2(t))^4}
    \right)^\frac{p}{2} \diff t
    =+\infty.
\end{equation}
\end{corollary}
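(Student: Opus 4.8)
The plan is to deduce Corollary \ref{NarrowBandsRegCritIntro} directly from Corollary \ref{Cor2Intro}, by showing that the frequency--localization hypothesis forces, at each time $t\in(0,T_{max})$ with $u(\cdot,t)\not\equiv 0$, the pointwise estimate
\begin{equation}
    \frac{\|u(\cdot,t)\|_{\dot{H}^{\alpha-1}}^4}{\|u(\cdot,t)\|_{\dot{H}^{\alpha-2}}^2\,\|u(\cdot,t)\|_{\dot{H}^{\alpha}}^2}\;\geq\;\frac{R_1(t)^4}{R_2(t)^4}.
\end{equation}
Granting this, since $s\mapsto s^{p/2}$ is nondecreasing on $[0,\infty)$, $p>0$, and $\|u(\cdot,\tau)\|_{\dot{H}^\alpha}^p\geq 0$, we obtain the pointwise-in-$\tau$ inequality
\begin{equation}
    \|u\|_{\dot{H}^\alpha}^p\left(1-\frac{\|u\|_{\dot{H}^{\alpha-1}}^4}{\|u\|_{\dot{H}^{\alpha-2}}^2\|u\|_{\dot{H}^\alpha}^2}\right)^{\frac{p}{2}}\;\leq\;\|u\|_{\dot{H}^\alpha}^p\left(1-\frac{R_1(\tau)^4}{R_2(\tau)^4}\right)^{\frac{p}{2}}.
\end{equation}
Integrating this over $\tau\in[0,t]$ and feeding it into the exponential in Corollary \ref{Cor2Intro} (legitimate because $\Tilde{C}_\alpha>0$ and $\exp$ is increasing) yields the claimed growth bound; the blowup statement follows by the same comparison, since $\int_0^{T_{max}}\|u\|_{\dot H^\alpha}^p(1-R_1^4/R_2^4)^{p/2}\diff t$ dominates the corresponding integral in Corollary \ref{Cor2Intro}, which is $+\infty$ when $T_{max}<+\infty$. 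The case $u(\cdot,t)\equiv 0$ contributes nothing, so we may assume $u(\cdot,t)\not\equiv 0$, which makes every norm in the denominators strictly positive.

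For the pointwise estimate I would argue entirely on the Fourier side. By hypothesis $\widehat{u}(\cdot,t)$ is supported in the annulus on which $(2\pi R_1(t))^2\leq (2\pi|\xi|)^2\leq (2\pi R_2(t))^2$; on this set the weight $(2\pi|\xi|)^{2(\alpha-2)}$ is bounded above and below by positive constants, so $\|u(\cdot,t)\|_{\dot{H}^{\alpha-2}}$, $\|u(\cdot,t)\|_{\dot{H}^{\alpha-1}}$, and $\|u(\cdot,t)\|_{\dot{H}^{\alpha}}$ are all finite and comparable to $\|u(\cdot,t)\|_{\dot{H}^1}$, so all ratios are well defined. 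Writing $(2\pi|\xi|)^{2(\alpha-1)}=(2\pi|\xi|)^{2}\,(2\pi|\xi|)^{2(\alpha-2)}$ and integrating against $|\widehat{u}(\xi,t)|^2$ over the annulus gives $\|u(\cdot,t)\|_{\dot{H}^{\alpha-1}}^2\geq (2\pi R_1(t))^2\|u(\cdot,t)\|_{\dot{H}^{\alpha-2}}^2$, hence $\|u(\cdot,t)\|_{\dot{H}^{\alpha-1}}^4\geq (2\pi R_1(t))^4\|u(\cdot,t)\|_{\dot{H}^{\alpha-2}}^4$; similarly, writing $(2\pi|\xi|)^{2\alpha}=(2\pi|\xi|)^{4}\,(2\pi|\xi|)^{2(\alpha-2)}$ gives $\|u(\cdot,t)\|_{\dot{H}^{\alpha}}^2\leq (2\pi R_2(t))^4\|u(\cdot,t)\|_{\dot{H}^{\alpha-2}}^2$. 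Dividing the first bound by the second and cancelling the factor $\|u(\cdot,t)\|_{\dot{H}^{\alpha-2}}^4>0$ produces the desired inequality.

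I do not anticipate a genuine obstacle here: once Corollary \ref{Cor2Intro} is in hand, the entire content is the elementary frequency-side computation above, which simply quantifies that a solution whose spectrum is confined to a thin annulus stays quantitatively away from saturating the interpolation inequality \eqref{HilbertInterpolate}. Two minor points worth recording: first, the same computation, using instead $(2\pi|\xi|)^{2(\alpha-1)}=(2\pi|\xi|)^{2\alpha}(2\pi|\xi|)^{-2}\geq (2\pi R_2(t))^{-2}(2\pi|\xi|)^{2\alpha}$ in one of the two factors of the numerator, actually yields the stronger lower bound $R_1(t)^2/R_2(t)^2$, but we state the result with $R_1(t)^4/R_2(t)^4$ so as to mirror the structure of Corollary \ref{Cor2Intro}; second, the support hypothesis is used only pointwise in time under the integral sign, so no propagation in time of the frequency localization is needed.
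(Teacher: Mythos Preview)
Your proposal is correct and follows essentially the same route as the paper: the paper proves the corollary as an immediate consequence of Corollary~\ref{Cor2Intro} together with Proposition~\ref{NarrowBands}, whose content is exactly the Fourier-side computation you carry out inline (the bounds $\|u\|_{\dot H^{\alpha-1}}^2\geq (2\pi R_1)^2\|u\|_{\dot H^{\alpha-2}}^2$ and $\|u\|_{\dot H^{\alpha}}^2\leq (2\pi R_2)^4\|u\|_{\dot H^{\alpha-2}}^2$, then divide). Your observation that the sharper lower bound $R_1^2/R_2^2$ is available also matches the paper's Proposition~\ref{NarrowBands}.
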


There are a number of previous results for Navier--Stokes regularity criteria involving frequency localization in Fourier space \cites{Bradshaw,Luo,Shvydkoy}. There is not enough space to discuss these results in detail here, as stating the main theorems would require us to define a number of objects from Littlewood-Paley theory, but we will note that the regularity criterion in \cite{Shvydkoy} has an explicit connection with the Kolmogorov scaling in turbulence.
Corollary \ref{NarrowBandsRegCritIntro} can also be seen as providing a heuristic connection between the regularity criterion in Theorem \ref{MainTheoremIntro}, and the Kolmogorov phenomenological theory of turbulence.
Corollary \ref{NarrowBandsRegCritIntro} shows that solutions supported on narrow bands in Fourier space are not good candidates for finite-time blowup; this is consistent with the Kolmogorov-Obukhov phenomenology of turbulence
\cites{Kolmogorov,Obukhov}, which stipulates that turbulent flows cannot localize around a small number of frequencies, specifically that 
the energy spectrum for turbulent flows has a decay in Fourier space on the order of
$|\xi|^{-\frac{5}{3}}$ in the inertial range.

\section{Proofs of the results}

Before beginning the proof of the main theorem, we will first need to establish an identity for the growth of enstrophy related to eigenfunctions of the Laplacian.

\begin{lemma} \label{GrowthID}
Suppose $u\in C\left(\left[0,T_{max}\right);H^1\right)$
is a mild solution of the Navier--Stokes equation. Then for all $0<t<T_{max},$ and for all $\lambda(t)\in \mathbb{R}$
\begin{equation}
    \frac{\diff}{\diff t} \frac{1}{2}\|\nabla u(\cdot,t)\|_{L^2}^2
    =-\|\Delta u\|_{L^2}^2 
    -\left<-\Delta u-\lambda u,(u\cdot\nabla)u\right>.
\end{equation}
\end{lemma}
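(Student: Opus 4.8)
The plan is to differentiate the enstrophy directly, substitute the mild-solution evolution equation, and then exploit two structural facts: that $P_{df}$ is an orthogonal projection on $L^2$ which commutes with $\Delta$ and fixes divergence-free fields, and that the nonlinear term is orthogonal to $u$ itself in $L^2$.

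First I would record the regularity that makes the formal manipulations rigorous. For a mild solution with $u^0\in H^1$, parabolic smoothing in the Fujita--Kato theory gives $u(\cdot,t)\in H^m$ for every $m$ and every $t\in(0,T_{max})$, with $t\mapsto u(\cdot,t)$ continuously differentiable (say into $L^2$) and satisfying the differential form $\partial_t u=\Delta u-P_{df}\!\left((u\cdot\nabla)u\right)$ pointwise in $t$; this justifies differentiating $\tfrac12\|\nabla u(\cdot,t)\|_{L^2}^2$ and all the integrations by parts below, with no boundary contributions because of the decay furnished by $u(\cdot,t)\in H^m$. (Alternatively one could first derive the identity for a Galerkin or retarded-mollifier approximation and pass to the limit.)

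The core computation then reads, for $0<t<T_{max}$,
\begin{equation}
    \frac{\diff}{\diff t}\frac12\|\nabla u\|_{L^2}^2
    =\langle \nabla u,\nabla \partial_t u\rangle
    =-\langle \Delta u,\partial_t u\rangle
    =-\|\Delta u\|_{L^2}^2+\big\langle \Delta u,P_{df}\!\left((u\cdot\nabla)u\right)\big\rangle.
\end{equation}
Since $\nabla\cdot u=0$ forces $\nabla\cdot(\Delta u)=0$, we have $P_{df}(\Delta u)=\Delta u$, and self-adjointness of $P_{df}$ gives
\begin{equation}
    \big\langle \Delta u,P_{df}\!\left((u\cdot\nabla)u\right)\big\rangle
    =\big\langle P_{df}(\Delta u),(u\cdot\nabla)u\big\rangle
    =\langle \Delta u,(u\cdot\nabla)u\rangle
    =-\langle -\Delta u,(u\cdot\nabla)u\rangle.
\end{equation}
Finally, the $\lambda$ term is free: integrating by parts and using incompressibility,
\begin{equation}
    \langle u,(u\cdot\nabla)u\rangle
    =\int_{\mathbb{R}^3} u_i u_j\,\partial_i u_j\diff x
    =\frac12\int_{\mathbb{R}^3} u_i\,\partial_i|u|^2\diff x
    =-\frac12\int_{\mathbb{R}^3} (\nabla\cdot u)\,|u|^2\diff x
    =0,
\end{equation}
so $\langle -\Delta u-\lambda u,(u\cdot\nabla)u\rangle=\langle -\Delta u,(u\cdot\nabla)u\rangle$ for every $\lambda(t)\in\mathbb{R}$, and combining the three displays yields the claimed identity.

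The only genuine subtlety, and the step I would treat most carefully, is the first one: confirming that an $H^1$ mild solution is regular enough on $(0,T_{max})$ for $\frac{\diff}{\diff t}\|\nabla u\|_{L^2}^2$ to exist classically and for the identity $\langle\nabla u,\nabla\partial_t u\rangle=-\langle\Delta u,\partial_t u\rangle$ (which needs $u\in H^2$ and integrability of $\partial_t u$) to hold with vanishing boundary terms. Once that is in place, everything else is purely algebraic.
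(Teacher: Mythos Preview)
Your proof is correct and follows essentially the same route as the paper: differentiate the enstrophy, use the equation to obtain $-\|\Delta u\|_{L^2}^2-\langle -\Delta u,(u\cdot\nabla)u\rangle$, and then invoke the orthogonality $\langle u,(u\cdot\nabla)u\rangle=0$ (via integration by parts and $\nabla\cdot u=0$) to insert the $\lambda u$ term for free. You are simply more explicit than the paper about the parabolic smoothing needed to justify the differentiation and about removing the Leray projector via self-adjointness and $\nabla\cdot(\Delta u)=0$, both of which the paper takes for granted.
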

\begin{proof}
It is easy to see from the Navier--Stokes equation that
\begin{equation}
    \frac{\diff}{\diff t} \frac{1}{2}\|\nabla u(\cdot,t)\|_{L^2}^2
    =
    -\|-\Delta u\|_{L^2}^2-\left<-\Delta u, (u\cdot\nabla)u\right>.
\end{equation}
We know that $u\in H^1,$ and therefore we have sufficient regularity to integrate by parts, using the condition $\nabla \cdot u=0$, to conclude that
\begin{align}
    \left<(u\cdot\nabla)u,u\right>
    &=
    -\left<u,(u\cdot\nabla)u\right>\\
    &=0.
\end{align}
Because we know that for all $\lambda\in\mathbb{R}$
\begin{equation}
    \left<\lambda u,(u\cdot\nabla)u\right>=0,
\end{equation}
we can conclude that 
for all $\lambda(t)\in \mathbb{R}$,
\begin{equation}
    \frac{\diff}{\diff t} \frac{1}{2}\|\nabla u(\cdot,t)\|_{L^2}^2
    =-\|\Delta u\|_{L^2}^2 
    -\left<-\Delta u-\lambda u,(u\cdot\nabla)u\right>.
\end{equation}
This completes the proof.
\end{proof}

One of the other main ingredients in our proof will be the fractional Sobolev inequality, which is stated below.
\begin{theorem} \label{Sobolev}
    Suppose $0<s<\frac{3}{2},$ and 
    $\frac{1}{q}=\frac{1}{2}-\frac{s}{3}.$
    Then for all 
    $f\in\dot{H}^s\left(\mathbb{R}^3\right),$
    \begin{equation}
        \|f\|_{L^q}\leq C_s \|f\|_{\dot{H}^s},
    \end{equation}
    where
    \begin{equation}
        C_s=2^{-\frac{s}{3}}\pi^{-\frac{4}{3}s}
        \left(\frac{\Gamma\left(\frac{3}{2}-s\right)}
        {\Gamma\left(\frac{3}{2}+s\right)}
        \right)^\frac{1}{2}
    \end{equation}
    Note that the scaling relation between the parameters $q$ and $s$ can be stated equivalently as 
    \begin{equation}
        s=\frac{3}{2}-\frac{3}{q}.
    \end{equation}
\end{theorem}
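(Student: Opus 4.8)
The plan is to deduce this sharp fractional Sobolev estimate from the Hardy--Littlewood--Sobolev inequality with its best constant. By density of Schwartz functions in $\dot{H}^s\left(\mathbb{R}^3\right)$ it is enough to prove the bound for $f$ Schwartz.

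First I would set $g=(-\Delta)^{s/2}f$, so that $g\in L^2$ with $\|g\|_{L^2}=\|f\|_{\dot{H}^s}$ and $f=(-\Delta)^{-s/2}g$. In the Fourier normalization used in this paper (the multiplier of $-\Delta$ being $4\pi^2|\xi|^2$), the operator $(-\Delta)^{-s/2}$ is convolution against the Riesz kernel, so that
\begin{equation}
    f(x)=c_{3,s}\int_{\mathbb{R}^3}\frac{g(y)}{|x-y|^{3-s}}\diff y,
    \qquad
    c_{3,s}=\frac{1}{2^{s}\,\pi^{3/2}}\;
    \frac{\Gamma\!\left(\tfrac{3-s}{2}\right)}{\Gamma\!\left(\tfrac{s}{2}\right)},
\end{equation}
a formula one checks directly from the Fourier transform of $|x|^{s-3}$. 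Since $0<s<\tfrac32$, the relation $\tfrac1q=\tfrac12-\tfrac s3$ is precisely the one, namely $\tfrac1q=\tfrac12+\tfrac{3-s}{3}-1$, under which the convolution form of Hardy--Littlewood--Sobolev applies with first exponent $2$; invoking it in the endpoint version that retains one $L^2$ factor, which is the case for which Lieb computed the sharp constant, gives $\|f\|_{L^q}\le c_{3,s}H_{3,s}\|g\|_{L^2}$, with $H_{3,s}$ the corresponding best constant written through Gamma functions.

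What remains is constant-chasing: using the Legendre duplication formula and the reflection formula for $\Gamma$, one verifies that the product $c_{3,s}H_{3,s}$ simplifies to $2^{-s/3}\pi^{-4s/3}\big(\Gamma(\tfrac32-s)/\Gamma(\tfrac32+s)\big)^{1/2}$, and one restates $\tfrac1q=\tfrac12-\tfrac s3$ as $s=\tfrac32-\tfrac3q$, under which $0<s<\tfrac32$ corresponds to $2<q<\infty$. I expect this bookkeeping, and more essentially the reliance on the \emph{sharp} Hardy--Littlewood--Sobolev inequality, to be the only real difficulty: the soft version of HLS (via weak Young's inequality and real interpolation) yields the embedding at once but loses the explicit value of $C_s$, so the sharp input cannot be circumvented if one wants the stated constant.

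For completeness I would note the alternative, self-contained but longer, route of directly exhibiting extremizers: show existence by symmetric decreasing rearrangement together with a compactness argument (or via the conformal equivalence with the round $S^3$), identify the extremal profile as a dilate of $x\mapsto(1+|x|^2)^{-(3-2s)/2}$ by classifying the solutions of the associated Euler--Lagrange equation, and then evaluate $\|\cdot\|_{L^q}$ and $\|\cdot\|_{\dot{H}^s}$ on that function, the second using the explicit Fourier transform of Bessel potentials. The classification step there is the delicate one, which is why I would prefer the Hardy--Littlewood--Sobolev route.
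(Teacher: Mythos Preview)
The paper does not actually prove this theorem: immediately after stating it, the author simply attributes the non-sharp $s=1$ case to Sobolev, the sharp $s=1$ case to Talenti, and the sharp fractional case $0<s<\tfrac32$ to Lieb, and moves on. So there is no ``paper's own proof'' to compare against; the result is quoted as background.

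Your proposed argument---represent $f$ as a Riesz potential of $g=(-\Delta)^{s/2}f$ and then invoke the sharp Hardy--Littlewood--Sobolev inequality---is essentially the route Lieb takes in the cited paper, so in spirit you are reproducing the reference rather than offering something new. One point worth tightening: you say you will use HLS ``in the endpoint version that retains one $L^2$ factor, which is the case for which Lieb computed the sharp constant.'' In fact Lieb's sharp constant is for the \emph{diagonal} HLS (both Lebesgue exponents equal), not for the asymmetric form $\|I_s g\|_{L^q}\le C\|g\|_{L^2}$ directly. The bridge is the duality/semigroup step: dualize the Sobolev inequality to $\|h\|_{\dot H^{-s}}\le C\|h\|_{L^{q'}}$, and observe that
\[
\|h\|_{\dot H^{-s}}^2=\big\langle (-\Delta)^{-s}h,h\big\rangle
= c_{3,2s}\iint \frac{h(x)h(y)}{|x-y|^{3-2s}}\,\diff x\,\diff y,
\]
which is exactly the diagonal HLS pairing with $\lambda=3-2s$ and both exponents $q'$ (and indeed $\tfrac{2}{q'}+\tfrac{3-2s}{3}=2$). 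With that correction the constant bookkeeping you outline goes through and yields the stated $C_s$. Your alternative direct-extremizer route is also valid and is likewise in Lieb's paper.
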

The Sobolev inequality was first proven by Sobolev \cite{Sobolev} in the case where $s=1$.
The sharp version of this inequality was proven by Talenti \cite{Talenti} in the case where $s=1$, and the general sharp version of this inequality with $0<s<\frac{3}{2}$ was proven by Lieb \cite{Lieb}.
With these results established, we can now prove Theorem \ref{MainTheoremIntro}, which is restated here for the reader's convenience.

\begin{theorem} \label{MainTheorem}
Suppose $u\in C\left(\left[0,T_{max}\right);H^1\right)$
is a mild solution of the Navier--Stokes equation, and suppose $\frac{6}{5}<q\leq 3, \frac{2}{p}+\frac{3}{q}=3.$ Then for all $0<t<T_{max}$
\begin{equation} \label{GrowthBoundBody}
    \|\nabla u(\cdot,t)\|_{L^2}^2\leq 
    \left\|\nabla u^0\right\|_{L^2}^2
    \exp\left(C_q \int_0^t
    \inf_{\lambda(\tau)\in\mathbb{R}}
    \|-\Delta u-\lambda u\|_{L^q}^p \diff\tau \right),
\end{equation}
where $C_q>0$ depends only on $q.$
In particular, if $T_{max}<+\infty$ then
\begin{equation}
    \int_0^{T_{max}} \inf_{\lambda(t)\in\mathbb{R}}
    \|-\Delta u-\lambda u\|_{L^q}^p \diff t
    =+\infty.
\end{equation}
\end{theorem}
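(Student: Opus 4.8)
The plan is to convert the identity of Lemma~\ref{GrowthID} into a differential inequality for the enstrophy and then apply Grönwall's inequality. By parabolic regularization a mild solution is smooth on $(0,T_{max})$, so fix $0<t_0<t<T_{max}$ and work on $[t_0,t]$. In Lemma~\ref{GrowthID} the parameter $\lambda(\tau)$ is free; applying Hölder's inequality with exponents $q$ and $q'=\frac{q}{q-1}$ to the nonlinear term gives, for every measurable $\lambda(\cdot)$,
\begin{equation*}
\frac{\diff}{\diff \tau}\tfrac12\|\nabla u\|_{L^2}^2 = -\|\Delta u\|_{L^2}^2 - \langle -\Delta u-\lambda u,(u\cdot\nabla)u\rangle \leq -\|\Delta u\|_{L^2}^2 + \|-\Delta u-\lambda u\|_{L^q}\,\|(u\cdot\nabla)u\|_{L^{q'}} .
\end{equation*}
So everything reduces to estimating $\|(u\cdot\nabla)u\|_{L^{q'}}$ by the $\dot{H}^1$ and $\dot{H}^2$ norms of $u$.

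The crux is the product estimate
\begin{equation*}
\|(u\cdot\nabla)u\|_{L^{q'}} \leq C_q\,\|\nabla u\|_{L^2}^{\,2-\theta}\,\|\Delta u\|_{L^2}^{\,\theta}, \qquad \theta:=\frac3q-1=2-\frac2p ,
\end{equation*}
which I would prove in three steps. First, split by Hölder: $\|(u\cdot\nabla)u\|_{L^{q'}}\leq\|u\|_{L^a}\|\nabla u\|_{L^b}$ with $\frac1a+\frac1b=1-\frac1q$ and a choice of $a\in[6,\infty)$, $b\in[2,6]$; such a pair exists precisely when $\frac65<q\leq3$, the endpoint $q=\frac65$ being excluded because it forces $a=\infty$, where the endpoint Sobolev embedding $\dot{H}^{3/2}\hookrightarrow L^\infty$ fails. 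Second, apply the fractional Sobolev inequality (Theorem~\ref{Sobolev}): $\|u\|_{L^a}\leq C\|u\|_{\dot{H}^{s_1}}$ with $s_1=\frac32-\frac3a$, and $\|\nabla u\|_{L^b}\leq C\|\nabla u\|_{\dot{H}^{3/2-3/b}}=C\|u\|_{\dot{H}^{s_2}}$ with $s_2=\frac52-\frac3b$, both exponents lying in $[1,2]$ by the choice of $a,b$. Third, interpolate each homogeneous norm between $\dot{H}^1$ and $\dot{H}^2$, $\|u\|_{\dot{H}^{s_i}}\leq\|u\|_{\dot{H}^1}^{\,2-s_i}\|u\|_{\dot{H}^2}^{\,s_i-1}$, and multiply: the exponent of $\|u\|_{\dot{H}^2}=\|\Delta u\|_{L^2}$ is $(s_1-1)+(s_2-1)=2-\frac3{q'}=\frac3q-1=\theta$, independent of the admissible split $(a,b)$, and the exponent of $\|u\|_{\dot{H}^1}=\|\nabla u\|_{L^2}$ is $2-\theta$. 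The scaling relation $\frac2p+\frac3q=3$ is exactly $\theta=2-\frac2p$.

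Inserting the product estimate into the first display and applying Young's inequality with conjugate exponents $\frac2\theta$ and $\frac2{2-\theta}$ (legitimate since $0\leq\theta<\frac32<2$; when $\theta=0$, i.e.\ $q=3,\,p=1$, no splitting is needed) gives, for small $\epsilon>0$,
\begin{equation*}
\|-\Delta u-\lambda u\|_{L^q}\,\|(u\cdot\nabla)u\|_{L^{q'}} \leq \epsilon\|\Delta u\|_{L^2}^2 + C_{q,\epsilon}\,\|-\Delta u-\lambda u\|_{L^q}^{\,2/(2-\theta)}\|\nabla u\|_{L^2}^2 ,
\end{equation*}
where $\frac2{2-\theta}=p$. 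Absorbing the $\epsilon\|\Delta u\|_{L^2}^2$ term into the dissipation leaves $\frac{\diff}{\diff\tau}\tfrac12\|\nabla u\|_{L^2}^2\leq C_q\|-\Delta u-\lambda(\tau)u\|_{L^q}^p\|\nabla u\|_{L^2}^2$. Grönwall on $[t_0,t]$ then yields the asserted bound with $C_q\int_{t_0}^t\|-\Delta u-\lambda(\tau)u\|_{L^q}^p\diff\tau$ in the exponent, for \emph{every} measurable $\lambda(\cdot)$. Since $\lambda\mapsto\|-\Delta u(\tau)-\lambda u(\tau)\|_{L^q}$ is continuous, convex and coercive, its minimiser exists and can be chosen measurably in $\tau$, so the infimum over $\lambda(\cdot)$ of the integral equals $\int_{t_0}^t\inf_{\lambda\in\mathbb{R}}\|-\Delta u-\lambda u\|_{L^q}^p\diff\tau$. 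Finally let $t_0\to0^+$, using $\|\nabla u(\cdot,t_0)\|_{L^2}\to\|\nabla u^0\|_{L^2}$ from $u\in C([0,T_{max});H^1)$ together with monotone convergence; this gives \eqref{GrowthBoundBody}, and the concluding blow-up statement is immediate since an $H^1$ mild solution with $T_{max}<\infty$ has $\|\nabla u(\cdot,t)\|_{L^2}\to\infty$ as $t\to T_{max}$.

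I expect the one genuinely delicate step to be the product estimate: one must check that an admissible Hölder pair $(a,b)$ exists for every $q\in(\frac65,3]$ and that the resulting interpolation exponents remain in $[0,1]$. This is precisely where the hypotheses on $q$ (equivalently $p$) enter, and where the failure of $\dot{H}^{3/2}\hookrightarrow L^\infty$ forces the strict inequality $q>\frac65$. The remaining ingredients---Hölder, Young, Grönwall, the measurable selection used to pull the infimum through the integral, and the appeal to parabolic smoothing---are routine.
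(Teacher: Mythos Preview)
Your proof is correct and follows essentially the same route as the paper: both use the enstrophy identity of Lemma~\ref{GrowthID}, apply H\"older to the trilinear term with a split $\|(u\cdot\nabla)u\|_{L^{q'}}\leq\|u\|_{L^a}\|\nabla u\|_{L^b}$, convert to Sobolev norms, interpolate between $\dot{H}^1$ and $\dot{H}^2$, absorb the dissipation via Young's inequality, and close with Gr\"onwall. The only cosmetic differences are that the paper fixes the explicit pair $\tfrac{1}{a}=\tfrac{5}{18}-\tfrac{1}{3q}$, $\tfrac{1}{b}=\tfrac{13}{18}-\tfrac{2}{3q}$ whereas you observe (correctly) that any admissible $(a,b)$ yields the same exponent $\theta$, and that you are slightly more careful about measurable selection of $\lambda(\tau)$ and the passage $t_0\to 0^+$.
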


\begin{proof}
We know that if $T_{max}<+\infty,$ then
\begin{equation}
    \lim_{t \to T_{max}}\|\nabla u(\cdot,t)\|_{L^2}=+\infty,
\end{equation}
so it suffices to prove the bound \eqref{GrowthBoundBody}. We will not keep track of the value of the constants $C_q.$ It is possible to compute the value of the constant $C_q$ explicitly in terms of the sharp fractional Sobolev inequality in Theorem \ref{Sobolev}, but we will not concern ourselves with long expressions for the value of the constant that would only clutter up this paper without adding any real mathematical insight.

First we will consider the case when $q=3, p=1.$ Using the identity for enstrophy growth in Proposition \ref{GrowthID}, and applying H\"older's inequality and the Sobolev inequality, we find that for all $0<t<T_{max},$ and for all $\lambda(t)\in\mathbb{R},$
\begin{align}
    \frac{\diff}{\diff t} \frac{1}{2}\|\nabla u(\cdot,t)\|_{L^2}^2
    &=
    -\|\Delta u\|_{L^2}^2 
    -\left<-\Delta u-\lambda u,(u\cdot\nabla)u\right>\\
    &\leq
    \|-\Delta u-\lambda u\|_{L^3} \|u\|_{L^6}\|\nabla u\|_{L^2}\\
    &\leq 
    C \|-\Delta u-\lambda u\|_{L^3}\|\nabla u\|_{L^2}^2.
\end{align}
Multiplying both sides by $2$ and taking the infimum over $\lambda(t) \in \mathbb{R},$ we find that
\begin{equation}
    \frac{\diff}{\diff t} \|\nabla u(\cdot,t)\|_{L^2}^2
    \leq C_3 \inf_{\lambda(t)\in\mathbb{R}}
    \|-\Delta u-\lambda u\|_{L^3}\|\nabla u\|_{L^2}^2.
\end{equation}
Applying Gr\"onwall's inequality we find that
for all $0<t<T_{max}$
\begin{equation}
    \|\nabla u(\cdot,t)\|_{L^2}^2\leq 
    \left\|\nabla u^0\right\|_{L^2}^2
    \exp\left(C_3 \int_0^t
    \inf_{\lambda(\tau)\in\mathbb{R}}
    \|-\Delta u-\lambda u\|_{L^3} \diff\tau \right),
\end{equation}
and we are done with the case where $q=3.$

Now we will consider the case $\frac{6}{5}<q<3.$
First we will take $6<a<+\infty$ to be given by
\begin{equation} \label{DefA}
    \frac{1}{a}=\frac{5}{18}-\frac{1}{3q},
\end{equation}
and $2<b<6$ to be given by 
\begin{equation} \label{DefB}
    \frac{1}{b}=\frac{13}{18}-\frac{2}{3q}.
\end{equation}
Note that 
\begin{equation}
    \frac{1}{a}+\frac{1}{b}+\frac{1}{q}=1,
\end{equation}
so again using the identity for enstrophy growth from Lemma \ref{GrowthID}, H\"older's inequality, and the fractional Sobolev inequality, we find that for all $0<t<T_{max}$ and for all $\lambda(t)\in\mathbb{R},$
\begin{align}
    \frac{\diff}{\diff t} 
    \frac{1}{2}\|\nabla u(\cdot,t)\|_{L^2}^2
    &=
    -\|\Delta u\|_{L^2}^2 
    -\left<-\Delta u-\lambda u,(u\cdot\nabla)u\right>\\
    &\leq 
    -\|\Delta u\|_{L^2}^2
    +\|\Delta u-\lambda u\|_{L^q}\|u\|_{L^a}\|\nabla u\|_{L^b}\\
    &\leq
    -\|\Delta u\|_{L^2}^2
    +C\|\Delta u-\lambda u\|_{L^q}
    \|\nabla u\|_{\dot{H}^\alpha}\|\nabla u\|_{\dot{H}^\beta},
\end{align}
where 
\begin{equation}
    \alpha=\frac{1}{2}-\frac{3}{a},
\end{equation}
and
\begin{equation}
    \beta=\frac{3}{2}-\frac{3}{b}.
\end{equation}
Plugging back into \eqref{DefA} and \eqref{DefB}, we find that
\begin{equation}
    \alpha=\frac{1}{q}-\frac{1}{3},
\end{equation}
and
\begin{equation}
    \beta=\frac{2}{q}-\frac{2}{3}.
\end{equation}
It is straightforward to see that 
$0<\alpha<\frac{1}{2}$ and $0<\beta<1,$ so we can interpolate between $L^2$ and $\dot{H}^1$ to find that
\begin{align}
    \frac{\diff}{\diff t}
    \frac{1}{2}\|\nabla u(\cdot,t)\|_{L^2}^2
    &\leq
    -\|\Delta u\|_{L^2}^2
    +C\|\Delta u-\lambda u\|_{L^q}
    \|\nabla u\|_{L^2}^{2-(\alpha+\beta)}
    \|\nabla u\|_{\dot{H}^1}^{\alpha+\beta}\\
    &=
    -\|\Delta u\|_{L^2}^2
    +C \|\Delta u-\lambda u\|_{L^q}
    \|\nabla u\|_{L^2}^{3-\frac{3}{q}}
    \|-\Delta u\|_{L^2}^{\frac{3}{q}-1},
\end{align}
where we have used the fact that $\alpha+\beta=\frac{3}{q}-1.$
Recalling that $\frac{2}{p}=3-\frac{3}{q},$ we can see that $\frac{6}{5}<q<3$ implies that $1<p<4.$ Take $\frac{4}{3}<r<+\infty$ to be the conjugate of p.
\begin{equation}
    \frac{1}{p}+\frac{1}{r}=1.
\end{equation}
Observe that
\begin{align}
    \frac{2}{r}
    &=2-\frac{2}{p}\\
    &=\frac{3}{q}-1.
\end{align}
Using this to simplify and applying Young's inequality with exponents $r$ and $p,$ we find that
\begin{align}
    \frac{\diff}{\diff t} 
    \frac{1}{2}\|\nabla u(\cdot,t)\|_{L^2}^2
    &\leq
    -\|\Delta u\|_{L^2}^2
    +C\|-\Delta u\|_{L^2}^{\frac{2}{r}}
    \|\Delta u-\lambda u\|_{L^q}
    \|\nabla u\|_{L^2}^{\frac{2}{p}}\\
    &\leq
    C\|\Delta u-\lambda u\|_{L^q}^p \|\nabla u\|_{L^2}^2
\end{align}
Multiplying both sides by $2$ and taking the infimum over $\lambda(t)\in\mathbb{R},$ we find that
\begin{equation}
     \frac{\diff}{\diff t}
     \|\nabla u(\cdot,t)\|_{L^2}^2
    \leq
    C_q \inf_{\lambda(t)\in\mathbb{R}}
    \|\Delta u-\lambda u\|_{L^q}^p \|\nabla u\|_{L^2}^2
\end{equation}
Applying Gr\"onwall's inequality we find that for all $0<t<T_{max}$
\begin{equation}
    \|\nabla u(\cdot,t)\|_{L^2}^2\leq 
    \left\|\nabla u^0\right\|_{L^2}^2
    \exp\left(C_q \int_0^t 
    \inf_{\lambda(\tau)\in\mathbb{R}}
    \|-\Delta u-\lambda u\|_{L^q}^p \diff\tau \right).
\end{equation}
This completes the proof.
\end{proof}

We will note here that the key element of the proof is the fact that
\begin{equation}
    \left<(u\cdot\nabla)u,u\right>=0.
\end{equation}
Because Tao's averaged 3D Navier--Stokes model equation also has the property
\begin{equation}
    \left<\Tilde{B}(u,u),u\right>=0,
\end{equation}
the regularity criterion in Theorem \ref{MainTheorem} and the subsequent corollaries will also apply to Tao's model equation, for which there is finite-time blowup \cite{TaoModel}. 

For general $\frac{6}{5}<q\leq 3$ we cannot compute
\begin{equation}
   \inf_{\lambda \in\mathbb{R}}
    \|-\Delta u-\lambda u\|_{L^q} 
\end{equation}
explicitly, but in the special case where $q=2,$ we can compute this infimum explicitly by making use of the Hilbert space structure.

\begin{proposition} \label{InfCalc}
For all $u\in H^2\left(\mathbb{R}^3\right),$ $u$ not identically zero,
\begin{equation}
    \inf_{\lambda \in\mathbb{R}}
    \|-\Delta u-\lambda u\|_{L^2}^2
    =
    \|-\Delta u\|_{L^2}^2
    \left(1-\frac{\|\nabla u\|_{L^2}^4}
    {\|u\|_{L^2}^2 \|-\Delta u\|_{L^2}^2}
    \right)
\end{equation}
\end{proposition}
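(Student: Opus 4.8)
The plan is to exploit the Hilbert-space structure directly: for fixed $u$, the map $\lambda \mapsto \|-\Delta u - \lambda u\|_{L^2}^2$ is a quadratic polynomial in the real variable $\lambda$, so its infimum is attained at the unique critical point and can be written down in closed form. First I would expand
\begin{equation}
    \|-\Delta u - \lambda u\|_{L^2}^2
    = \|-\Delta u\|_{L^2}^2 - 2\lambda \left<-\Delta u, u\right> + \lambda^2 \|u\|_{L^2}^2,
\end{equation}
noting that all three inner products are finite since $u \in H^2$. The coefficient of $\lambda^2$ is $\|u\|_{L^2}^2 > 0$ because $u$ is not identically zero, so this is a strictly convex parabola in $\lambda$ and the minimum is attained at $\lambda_* = \left<-\Delta u, u\right> / \|u\|_{L^2}^2$.

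Next I would substitute $\lambda_*$ back in; the standard computation for minimizing $a\lambda^2 - 2b\lambda + c$ gives minimum value $c - b^2/a$, so here
\begin{equation}
    \inf_{\lambda \in \mathbb{R}} \|-\Delta u - \lambda u\|_{L^2}^2
    = \|-\Delta u\|_{L^2}^2 - \frac{\left<-\Delta u, u\right>^2}{\|u\|_{L^2}^2}.
\end{equation}
The final step is to rewrite $\left<-\Delta u, u\right>$ in terms of the $\dot H^1$ norm. Since $u \in H^2 \subset H^1$ and $-\Delta u \in L^2$, integration by parts (equivalently, Plancherel on the Fourier side, where $\left<-\Delta u, u\right> = \int (2\pi|\xi|)^2 |\hat u(\xi)|^2 \, d\xi$) yields $\left<-\Delta u, u\right> = \|\nabla u\|_{L^2}^2$. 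Substituting and factoring out $\|-\Delta u\|_{L^2}^2$ gives exactly the claimed identity
\begin{equation}
    \|-\Delta u\|_{L^2}^2 \left(1 - \frac{\|\nabla u\|_{L^2}^4}{\|u\|_{L^2}^2 \|-\Delta u\|_{L^2}^2}\right),
\end{equation}
with the factoring legitimate because $\|-\Delta u\|_{L^2}^2 > 0$ (again using $u \not\equiv 0$, since $-\Delta u = 0$ would force $\hat u$ to be supported at the origin, hence $u \equiv 0$ as an $L^2$ function).

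There is no real obstacle here — the argument is a one-variable optimization. The only points requiring a word of care are the justification that all the inner products are finite and that the integration by parts / Plancherel identity $\left<-\Delta u, u\right> = \|\nabla u\|_{L^2}^2$ is valid, both of which follow immediately from $u \in H^2(\mathbb{R}^3)$, and the non-vanishing of $\|u\|_{L^2}$ and $\|-\Delta u\|_{L^2}$, which follow from $u \not\equiv 0$. I would also remark in passing that the resulting expression is manifestly nonnegative, consistent with the Cauchy–Schwarz inequality $\|\nabla u\|_{L^2}^2 = \left<-\Delta u, u\right> \leq \|-\Delta u\|_{L^2}\|u\|_{L^2}$, i.e. the interpolation inequality \eqref{HilbertInterpolate} in the case $\alpha = 2$.
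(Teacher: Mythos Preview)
Your proposal is correct and follows essentially the same approach as the paper's proof: expand $\|-\Delta u-\lambda u\|_{L^2}^2$ as a quadratic in $\lambda$, minimize, and identify $\langle -\Delta u,u\rangle=\|\nabla u\|_{L^2}^2$ by integration by parts. Your write-up is in fact slightly more careful than the paper's in justifying the positivity of $\|u\|_{L^2}^2$ and $\|-\Delta u\|_{L^2}^2$, but the argument is the same one-variable optimization.
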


\begin{proof}
Fix $u\in H^2.$ Define $f:\mathbb{R} \to \mathbb{R},$ by
\begin{align}
    f(\lambda)
    &=
    \|-\Delta u-\lambda u\|_{L^2}^2\\
    &=
    \|-\Delta u\|_{L^2}^2 -2\left<-\Delta u, u\right>\lambda
    +\|u\|_{L^2}^2 \lambda^2\\
    &=
    \|-\Delta u\|_{L^2}^2 -2\|\nabla u\|_{L^2}^2\lambda
    +\|u\|_{L^2}^2 \lambda^2
\end{align}
Taking the derivative of $f$ we find that
\begin{equation}
    f'(\lambda)=-2\|\nabla u\|_{L^2}^2+2\|u\|_{L^2}^2 \lambda.
\end{equation}
Let $\lambda_0=\frac{\|\nabla u\|_{L^2}^2}{\|u\|_{L^2}^2}.$
It is easy to see that for all $\lambda<\lambda_0, f'(\lambda)<0,$ for all $\lambda>\lambda_0, f'(\lambda)>0,$ and $f'(\lambda_0)=0,$
so we can conclude that $f$ has a global minimum at $\lambda_0.$
Therefore we can compute that
\begin{align}
    \inf_{\lambda \in\mathbb{R}}
    \|-\Delta u-\lambda u\|_{L^2}^2
    &=
    \inf_{\lambda \in\mathbb{R}} f(\lambda)\\
    &=
    f(\lambda_0)\\
    &=
    \|-\Delta u\|_{L^2}^2
    -\frac{\|\nabla u\|_{L^2}^4}{\|u\|_{L^2}^2} \\
    &=
    \|-\Delta u\|_{L^2}^2
    \left(1-\frac{\|\nabla u\|_{L^2}^4}
    {\|u\|_{L^2}^2 \|-\Delta u\|_{L^2}^2}\right).
\end{align}
This completes the proof.
\end{proof}

Using this identity for the infimum in the case where $q=2,$ we will now prove Corollary \ref{Cor1Intro}, which is restated here for the reader's convenience.

\begin{corollary} \label{Cor1}
Suppose $u\in C\left(\left[0,T_{max}\right);H^1\right)$
is a mild solution of the Navier--Stokes equation. Then for all $0<t<T_{max}$
\begin{equation} 
    \|\nabla u(\cdot,t)\|_{L^2}^2\leq 
    \left\|\nabla u^0\right\|_{L^2}^2
    \exp\left(C_2 \int_0^t
    \|-\Delta u\|_{L^2}^\frac{4}{3}
    \left(1-\frac{\|\nabla u\|_{L^2}^4}
    {\|u\|_{L^2}^2 \|-\Delta u\|_{L^2}^2}
    \right)^\frac{2}{3} \diff\tau \right),
\end{equation}
where $C_2>0$ is taken as in Theorem \ref{MainTheorem}.
In particular, if $T_{max}<+\infty$ then
\begin{equation}
    \int_0^{T_{max}} \|-\Delta u\|_{L^2}^\frac{4}{3}
    \left(1-\frac{\|\nabla u\|_{L^2}^4}
    {\|u\|_{L^2}^2 \|-\Delta u\|_{L^2}^2}
    \right)^\frac{2}{3} \diff t
    =+\infty.
\end{equation}
\end{corollary}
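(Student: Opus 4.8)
The plan is to specialize Theorem \ref{MainTheorem} to the case $q=2$ and then substitute the explicit formula for the infimum furnished by Proposition \ref{InfCalc}. First I would check that $q=2$ is admissible in Theorem \ref{MainTheorem}: we need $\frac{6}{5}<q\leq 3$, and $2$ lies in this range, so the theorem applies. The corresponding exponent $p$ is determined by the scaling relation $\frac{2}{p}+\frac{3}{q}=3$, which with $q=2$ gives $\frac{2}{p}=3-\frac{3}{2}=\frac{3}{2}$, hence $p=\frac{4}{3}$. Thus Theorem \ref{MainTheorem} yields, for all $0<t<T_{max}$,
\begin{equation}
    \|\nabla u(\cdot,t)\|_{L^2}^2\leq
    \left\|\nabla u^0\right\|_{L^2}^2
    \exp\left(C_2 \int_0^t
    \inf_{\lambda(\tau)\in\mathbb{R}}
    \|-\Delta u-\lambda u\|_{L^2}^{\frac{4}{3}} \diff\tau \right).
\end{equation}

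Next I would invoke Proposition \ref{InfCalc}. Since $u\in C([0,T_{max});H^1)$ is a mild solution, standard parabolic smoothing for $H^1$ mild solutions of Navier--Stokes guarantees that $u(\cdot,t)\in H^2$ for every $0<t<T_{max}$, so the proposition is applicable pointwise in time (the integrand is only evaluated for $t>0$, where this regularity holds, and the set of times where $u\equiv 0$ contributes nothing). Raising the identity of Proposition \ref{InfCalc} to the power $\frac{2}{3}$ gives
\begin{equation}
    \inf_{\lambda\in\mathbb{R}}\|-\Delta u-\lambda u\|_{L^2}^{\frac{4}{3}}
    = \|-\Delta u\|_{L^2}^{\frac{4}{3}}
    \left(1-\frac{\|\nabla u\|_{L^2}^4}{\|u\|_{L^2}^2\|-\Delta u\|_{L^2}^2}\right)^{\frac{2}{3}}.
\end{equation}
Substituting this into the growth bound above produces the claimed inequality, with the same constant $C_2$ from Theorem \ref{MainTheorem}. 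The final statement, that the time integral diverges when $T_{max}<+\infty$, then follows immediately: if the integral were finite, the exponential would be finite, forcing $\limsup_{t\to T_{max}}\|\nabla u(\cdot,t)\|_{L^2}<+\infty$, contradicting the blowup characterization $\lim_{t\to T_{max}}\|\nabla u(\cdot,t)\|_{L^2}=+\infty$ recalled in the proof of Theorem \ref{MainTheorem}.

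The only genuinely non-routine point is the regularity justification for applying Proposition \ref{InfCalc}, i.e. that $u(\cdot,t)\in H^2$ for $t>0$; this is where I would be most careful, though it is standard for $H^1$ mild solutions and could alternatively be sidestepped by noting that if $\|-\Delta u(\cdot,t)\|_{L^2}=+\infty$ on a positive-measure set then the integrand on the right-hand side is already $+\infty$ there and the inequality is vacuous. Everything else is pure algebra: matching $q=2$ to $p=\frac{4}{3}$, raising to the power $\frac{2}{3}$, and substituting.
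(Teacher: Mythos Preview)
Your proposal is correct and follows essentially the same route as the paper: specialize Theorem \ref{MainTheorem} to $q=2$, $p=\frac{4}{3}$, then substitute the explicit infimum from Proposition \ref{InfCalc} after raising to the $\frac{2}{3}$ power. Your additional remarks on the $H^2$ regularity needed to invoke Proposition \ref{InfCalc} are a reasonable embellishment that the paper leaves implicit.
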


\begin{proof}
We will begin by observing that when $q=2, p=\frac{4}{3},$ then
\begin{equation}
    \frac{2}{p}+\frac{3}{q}=3.
\end{equation}
Next we know from Proposition \ref{InfCalc}, that
\begin{equation}
        \inf_{\lambda \in\mathbb{R}}
    \|-\Delta u-\lambda u\|_{L^2}^2
    =
    \|-\Delta u\|_{L^2}^2
    \left(1-\frac{\|\nabla u\|_{L^2}^4}
    {\|u\|_{L^2}^2 \|-\Delta u\|_{L^2}^2}
    \right).
\end{equation}
Taking both sides of the equation to the $\frac{2}{3}$ power, we find that
\begin{equation} \label{CorStep}
    \inf_{\lambda \in\mathbb{R}}
    \|-\Delta u-\lambda u\|_{L^2}^\frac{4}{3}
    =
    \|-\Delta u\|_{L^2}^\frac{4}{3}
    \left(1-\frac{\|\nabla u\|_{L^2}^4}
    {\|u\|_{L^2}^2 \|-\Delta u\|_{L^2}^2}
    \right)^\frac{2}{3},
\end{equation}
and then the result follows as an immediate corollary of Theorem \ref{MainTheorem}.
\end{proof}

Now, we will note that while we cannot explicitly compute the infimum in Theorem \ref{MainTheorem}, for $2<q<3,$ we can compute this infimum in the Hilbert space 
$\dot{H}^\beta \hookrightarrow L^q,$ using the inner product structure, and this will give us an explicit, scale-critical regularity criterion, albeit one requiring a higher degree of regularity.

\begin{proposition} \label{InfCalc2}
Suppose $0\leq \beta<\frac{3}{2}.$
For all $u\in \dot{H}^\beta\left(\mathbb{R}^3\right)
\cap \dot{H}^{2+\beta} \left(\mathbb{R}^3\right),$ 
$u$ not identically zero,
\begin{equation}
    \inf_{\lambda \in\mathbb{R}}
    \|-\Delta u-\lambda u\|_{\dot{H}^\beta}^2
    =
    \|u\|_{\dot{H}^{2+\beta}}^2
    \left(1-\frac{\|u\|_{\dot{H}^{1+\beta}}^4}
    {\|u\|_{\dot{H}^\beta}^2
    \|u\|_{\dot{H}^{2+\beta}}^2}\right).
\end{equation}
\end{proposition}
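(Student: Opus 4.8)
The plan is to copy the proof of Proposition~\ref{InfCalc} almost verbatim, with the $L^2$ inner product replaced everywhere by the $\dot H^\beta$ inner product — equivalently, with a weight $(2\pi|\xi|)^{2\beta}$ inserted on the Fourier side. Fix $u\in\dot H^\beta\cap\dot H^{2+\beta}$ not identically zero and set $f(\lambda)=\|-\Delta u-\lambda u\|_{\dot H^\beta}^2$. Expanding the square in the Hilbert space $\dot H^\beta$ gives
\[
f(\lambda)=\|-\Delta u\|_{\dot H^\beta}^2-2\lambda\,\langle-\Delta u,u\rangle_{\dot H^\beta}+\lambda^2\|u\|_{\dot H^\beta}^2 .
\]
I would then identify the three coefficients on the Fourier side: $\|-\Delta u\|_{\dot H^\beta}^2=\int_{\mathbb R^3}(2\pi|\xi|)^{2\beta}\big(4\pi^2|\xi|^2\big)^2\big|\hat u(\xi)\big|^2\diff\xi=\|u\|_{\dot H^{2+\beta}}^2$, and likewise $\langle-\Delta u,u\rangle_{\dot H^\beta}=\int_{\mathbb R^3}(2\pi|\xi|)^{2\beta}\big(4\pi^2|\xi|^2\big)\big|\hat u(\xi)\big|^2\diff\xi=\|u\|_{\dot H^{1+\beta}}^2$, so that
\[
f(\lambda)=\|u\|_{\dot H^{2+\beta}}^2-2\lambda\,\|u\|_{\dot H^{1+\beta}}^2+\lambda^2\|u\|_{\dot H^\beta}^2 .
\]

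Next I would observe that since $0\le\beta<\frac32$ the quantity $\|\cdot\|_{\dot H^\beta}$ is a genuine norm, so $u\not\equiv0$ forces $\|u\|_{\dot H^\beta}^2>0$ and $f$ is a strictly convex parabola. Exactly as in Proposition~\ref{InfCalc}, $f'(\lambda)=-2\|u\|_{\dot H^{1+\beta}}^2+2\lambda\|u\|_{\dot H^\beta}^2$ changes sign from negative to positive at $\lambda_0=\|u\|_{\dot H^{1+\beta}}^2/\|u\|_{\dot H^\beta}^2$, which is therefore the global minimum; evaluating $f(\lambda_0)$ and factoring out $\|u\|_{\dot H^{2+\beta}}^2$ gives the asserted identity.

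There is no real obstacle here; the statement is the $\beta$-weighted version of Proposition~\ref{InfCalc} and the argument is the same one-variable calculus. The only points worth a sentence of care are bookkeeping: that $u\in\dot H^\beta\cap\dot H^{2+\beta}$ already implies $u\in\dot H^{1+\beta}$ (split $(2\pi|\xi|)^{2(1+\beta)}=(2\pi|\xi|)^{2\beta}(2\pi|\xi|)^2$ and use $ab\le\frac12(a^2+b^2)$, or interpolate), so every norm in the statement is finite and the Fourier-side expansion of $f$ is legitimate; and that $\beta<\frac32$ is precisely what guarantees $\|u\|_{\dot H^\beta}\ne0$, so the minimizer $\lambda_0$ is well defined. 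If one prefers, the whole computation can be done in one line on the Fourier side by writing $f(\lambda)=\int_{\mathbb R^3}(2\pi|\xi|)^{2\beta}\big(4\pi^2|\xi|^2-\lambda\big)^2\big|\hat u(\xi)\big|^2\diff\xi$ and expanding the square inside the integral, which makes the identification of the coefficients with $\|u\|_{\dot H^{2+\beta}}^2$, $\|u\|_{\dot H^{1+\beta}}^2$, and $\|u\|_{\dot H^\beta}^2$ immediate.
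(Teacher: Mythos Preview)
Your proposal is correct and follows essentially the same approach as the paper: expand $f(\lambda)=\|-\Delta u-\lambda u\|_{\dot H^\beta}^2$ as a quadratic in $\lambda$, identify the coefficients as $\|u\|_{\dot H^{2+\beta}}^2$, $\|u\|_{\dot H^{1+\beta}}^2$, and $\|u\|_{\dot H^\beta}^2$, and minimize at $\lambda_0=\|u\|_{\dot H^{1+\beta}}^2/\|u\|_{\dot H^\beta}^2$. Your added remarks on the finiteness of $\|u\|_{\dot H^{1+\beta}}$ by interpolation and on why $\|u\|_{\dot H^\beta}>0$ are nice bits of bookkeeping that the paper leaves implicit.
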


\begin{proof}
Fix $0\leq \beta<\frac{3}{2},$ and 
$u\in \dot{H}^\beta\left(\mathbb{R}^3\right)
\cap H^{2+\beta} \left(\mathbb{R}^3\right).$ 
Define $f:\mathbb{R} \to \mathbb{R},$ by
\begin{align}
    f(\lambda)
    &=
    \|-\Delta u-\lambda u\|_{\dot{H}^\beta}^2\\
    &=
    \|-\Delta u\|_{\dot{H}^\beta}^2 -2\left<-\Delta u, u\right>_{\dot{H}^\beta}\lambda
    +\|u\|_{\dot{H}^\beta}^2 \lambda^2\\
    &=
    \|u\|_{\dot{H}^{2+\beta}}^2
    -2\|u\|_{\dot{H}^{1+\beta}}^2\lambda
    +\|u\|_{\dot{H}^\beta}^2 \lambda^2
\end{align}
Taking the derivative of $f$ we find that
\begin{equation}
    f'(\lambda)=-2\|u\|_{\dot{H}^{1+\beta}}^2
    +2 \|u\|_{\dot{H}^\beta}^2 \lambda
\end{equation}
Let $\lambda_0=\frac{\|u\|_{\dot{H}^{1+\beta}}^2}
{\|u\|_{\dot{H}^\beta}^2}.$
It is easy to see that for all $\lambda<\lambda_0, f'(\lambda)<0,$ for all $\lambda>\lambda_0, f'(\lambda)>0,$ and $f'(\lambda_0)=0,$
so we can conclude that $f$ has a global minimum at $\lambda_0.$
Therefore we can compute that
\begin{align}
    \inf_{\lambda \in\mathbb{R}}
    \|-\Delta u-\lambda u\|_{\dot{H}^\beta}^2
    &=
    \inf_{\lambda \in\mathbb{R}} f(\lambda)\\
    &=
    f(\lambda_0)\\
    &=
    \|u\|_{\dot{H}^{2+\beta}}^2
    -\frac{\|u\|_{\dot{H}^{1+\beta}}^4}
    {\|u\|_{\dot{H}^\beta}^2} \\
    &=
    \|u\|_{\dot{H}^{2+\beta}}^2
    \left(1-\frac{\|u\|_{\dot{H}^{1+\beta}}^4}
    {\|u\|_{\dot{H}^\beta}^2
    \|u\|_{\dot{H}^{2+\beta}}^2}\right).
\end{align}
This completes the proof.
\end{proof}

Using Proposition \ref{InfCalc2}, and the Sobolev inequality corresponding to the embedding 
$\dot{H}^\beta \hookrightarrow L^q,$ we will now prove Corollary \ref{Cor2Intro}, which is restated here for the reader's convenience.

\begin{corollary} \label{Cor2}
Suppose $u\in C\left(\left[0,T_{max}\right);H^1\right)$
is a mild solution of the Navier--Stokes equation, and suppose $2\leq \alpha\leq \frac{5}{2}, 
\alpha=\frac{1}{2}+\frac{2}{p}.$ 
Then for all $0<t<T_{max}$
\begin{equation} 
    \|\nabla u(\cdot,t)\|_{L^2}^2\leq 
    \left\|\nabla u^0\right\|_{L^2}^2
    \exp\left(\Tilde{C}_\alpha \int_0^t
    \|u\|_{\dot{H}^\alpha}^p 
    \left(1-\frac{\|u\|_{\dot{H}^{\alpha-1}}^4}
    {\|u\|_{\dot{H}^{\alpha-2}}^2 \|u\|_{\dot{H}^\alpha}^2}
    \right)^\frac{p}{2} \diff\tau \right),
\end{equation}
where $\Tilde{C}_\alpha>0$ depends only on $\alpha.$
In particular, if $T_{max}<+\infty$ then
\begin{equation}
    \int_0^{T_{max}}
    \|u\|_{\dot{H}^\alpha}^p 
    \left(1-\frac{\|u\|_{\dot{H}^{\alpha-1}}^4}
    {\|u\|_{\dot{H}^{\alpha-2}}^2 \|u\|_{\dot{H}^\alpha}^2}
    \right)^\frac{p}{2} \diff t
    =+\infty.
\end{equation}
Note that the scaling relation between $\alpha$ and $p$ can alternatively be expressed by
\begin{equation}
    p=\frac{2}{\alpha-\frac{1}{2}}
\end{equation}
\end{corollary}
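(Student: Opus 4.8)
The plan is to deduce Corollary \ref{Cor2} from Theorem \ref{MainTheorem} by passing from the $L^q$-infimum to a $\dot{H}^\beta$-infimum that we can evaluate exactly via Proposition \ref{InfCalc2}. First I would fix the dictionary between parameters: given $2\le\alpha\le\frac52$, set $\beta=\alpha-2$, so $0\le\beta\le\frac12<\frac32$, and let $q$ be determined by the Sobolev relation $\beta=\frac32-\frac3q$, i.e. $q=\frac{3}{\frac32-\beta}=\frac{3}{\frac72-\alpha}$. One checks that $\alpha$ ranging over $[2,\frac52]$ makes $q$ range over $[\frac65,2]\subset(\frac65,3]$, so Theorem \ref{MainTheorem} applies with this $q$; and the exponent $p$ from $\frac2p+\frac3q=3$ satisfies $\frac2p=3-\frac3q=3-(\frac32-\beta)\cdot 1\cdot\frac{3}{3}$... more carefully, $\frac3q=\frac32-\beta$, so $\frac2p=3-(\frac32-\beta)=\frac32+\beta=\alpha-\frac12$, which is exactly the stated relation $\alpha=\frac12+\frac2p$, equivalently $p=\frac{2}{\alpha-\frac12}$. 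This bookkeeping is the only place the hypothesis $2\le\alpha\le\frac52$ enters, ensuring all the Sobolev and Hilbert-space inputs are in their valid ranges.

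Next I would apply the fractional Sobolev inequality (Theorem \ref{Sobolev}) with $s=\beta=\alpha-2$ to the function $-\Delta u-\lambda u$, which lies in $\dot{H}^\beta$ since $u\in\dot{H}^\alpha\cap\dot{H}^{\alpha-2}$ (here one should remark that for a mild solution the quantities on the right-hand side are finite on the relevant time interval, or else the bound is vacuous). This gives
\begin{equation}
    \|-\Delta u-\lambda u\|_{L^q}\le C_\beta\|-\Delta u-\lambda u\|_{\dot{H}^\beta}
\end{equation}
for every $\lambda\in\mathbb{R}$, hence after squaring and taking the infimum over $\lambda$,
\begin{equation}
    \inf_{\lambda\in\mathbb{R}}\|-\Delta u-\lambda u\|_{L^q}^2\le C_\beta^2\inf_{\lambda\in\mathbb{R}}\|-\Delta u-\lambda u\|_{\dot{H}^\beta}^2.
\end{equation}
Then I invoke Proposition \ref{InfCalc2} with this $\beta$, noting $\dot{H}^{2+\beta}=\dot{H}^\alpha$, $\dot{H}^{1+\beta}=\dot{H}^{\alpha-1}$, $\dot{H}^\beta=\dot{H}^{\alpha-2}$, to get
\begin{equation}
    \inf_{\lambda\in\mathbb{R}}\|-\Delta u-\lambda u\|_{\dot{H}^\beta}^2=\|u\|_{\dot{H}^\alpha}^2\left(1-\frac{\|u\|_{\dot{H}^{\alpha-1}}^4}{\|u\|_{\dot{H}^{\alpha-2}}^2\|u\|_{\dot{H}^\alpha}^2}\right).
\end{equation}

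Finally I would raise both sides to the power $\frac p2$ (legitimate since both sides are nonnegative and $x\mapsto x^{p/2}$ is monotone increasing on $[0,\infty)$) to obtain
\begin{equation}
    \inf_{\lambda\in\mathbb{R}}\|-\Delta u-\lambda u\|_{L^q}^p\le C_\beta^p\,\|u\|_{\dot{H}^\alpha}^p\left(1-\frac{\|u\|_{\dot{H}^{\alpha-1}}^4}{\|u\|_{\dot{H}^{\alpha-2}}^2\|u\|_{\dot{H}^\alpha}^2}\right)^{\frac p2},
\end{equation}
and substitute this into the exponential in Theorem \ref{MainTheorem}, absorbing $C_q C_\beta^p$ into a single constant $\tilde C_\alpha$ depending only on $\alpha$. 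The blowup statement then follows exactly as in Theorem \ref{MainTheorem}: if $T_{max}<\infty$ then $\|\nabla u(\cdot,t)\|_{L^2}\to\infty$, so the integral in the exponent must diverge. The argument is essentially a chain of reductions, so there is no genuine obstacle; the only thing requiring care is the parameter dictionary in the first paragraph — in particular verifying that the $\alpha$-range $[2,\frac52]$ maps into the valid ranges $\beta\in[0,\frac32)$ of Proposition \ref{InfCalc2} and $q\in(\frac65,3]$ of Theorem \ref{MainTheorem}, and checking the $\alpha=\frac52$ endpoint, where $\beta=\frac12$ is still admissible, against the $q=\frac65$ endpoint of the theorem.
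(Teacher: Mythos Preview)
Your approach is essentially identical to the paper's: bound the $L^q$-infimum by the $\dot{H}^\beta$-infimum via Sobolev with $\beta=\alpha-2$, evaluate the latter explicitly by Proposition \ref{InfCalc2}, and feed the result into Theorem \ref{MainTheorem}. Your parameter bookkeeping contains one slip: since $q=\frac{3}{\frac32-\beta}$, as $\beta$ runs over $[0,\tfrac12]$ one gets $q\in[2,3]$, not $[\tfrac65,2]$, so the $\alpha=\tfrac52$ endpoint corresponds to $q=3$ (well inside the admissible range) and your worry about hitting the excluded endpoint $q=\tfrac65$ evaporates. The paper also separates off $\alpha=2$ and invokes Corollary \ref{Cor1} directly, since $\beta=0$ falls outside the stated range $0<s<\tfrac32$ of Theorem \ref{Sobolev}; your uniform treatment is fine once one notes the $s=0$ case is the trivial identity $\|f\|_{L^2}=\|f\|_{\dot{H}^0}$.
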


\begin{proof}
In the case where $\alpha=2,$ this is precisely the same statement as Corollary \ref{Cor1}, 
so fix $2<\alpha\leq \frac{5}{2}.$ 
Let $\beta=\alpha-2,$ so we have $0<\beta\leq \frac{1}{2}.$ 
Let $\frac{1}{q}=\frac{1}{2}-\frac{\beta}{3},$
so we have the Sobolev embedding
$\dot{H}^\beta\left(\mathbb{R}^3\right)
\hookrightarrow L^q\left(\mathbb{R}^3\right).$
Then we can see that
\begin{align}
\frac{3}{q}&=\frac{3}{2}-\beta\\
&= \frac{7}{2}-\alpha.
\end{align}
Likewise we know that
\begin{equation}
    \frac{2}{p}=\alpha-\frac{1}{2},
\end{equation}
so we can conclude that
\begin{equation}
    \frac{2}{p}+\frac{3}{q}=3.
\end{equation}
We know from the Sobolev inequality and from Proposition \ref{InfCalc2} that
\begin{align}
    \inf_{\lambda \in\mathbb{R}}
    \|-\Delta u-\lambda u\|_{L^q}^2
    &\leq 
    C \inf_{\lambda \in\mathbb{R}}
    \|-\Delta u-\lambda u\|_{\dot{H}^\beta}^2\\
    &=
    C \|u\|_{\dot{H}^{2+\beta}}^2
    \left(1-\frac{\|u\|_{H^{1+\beta}}^4}
    {\|u\|_{\dot{H}^\beta}^2 \|u\|_{\dot{H}^{2+\beta}}^2}
    \right)\\
    &=
    \|u\|_{\dot{H}^\alpha}^2 
    \left(1-\frac{\|u\|_{\dot{H}^{\alpha-1}}^4}
    {\|u\|_{\dot{H}^{\alpha-2}}^2 \|u\|_{\dot{H}^\alpha}^2}
    \right)
\end{align}
Taking both sides of the equation to the power of $\frac{p}{2}$ we find that
\begin{equation}
    \inf_{\lambda \in\mathbb{R}}
    \|-\Delta u-\lambda u\|_{L^q}^p
    \leq
    C \|u\|_{\dot{H}^\alpha}^p 
    \left(1-\frac{\|u\|_{\dot{H}^{\alpha-1}}^4}
    {\|u\|_{\dot{H}^{\alpha-2}}^2 \|u\|_{\dot{H}^\alpha}^2}
    \right)^\frac{p}{2},
\end{equation}
and we have already shown that
$\frac{2}{p}+\frac{3}{q}=3,$
so the result then follows as an immediate corollary of Theorem \ref{MainTheorem}.
\end{proof}

We mentioned in the introduction that without the term $\left(1-\frac{\|u\|_{\dot{H}^{\alpha-1}}^4}
{\|u\|_{\dot{H}^{\alpha-1}}^2\|u\|_{\dot{H}^{\alpha-1}}^2}
\right)^\frac{p}{2},$  
Corollary \ref{Cor2Intro} is an immediate corollary of the a variant of the Ladyzhenskaya-Prodi-Serrin regularity criterion, which states that for a smooth solution of the Navier--Stokes equation, if $T_{max}<+\infty$ and we have $\frac{2}{p}+\frac{3}{q}=2, \frac{3}{2}<q\leq +\infty,$ then 
\begin{equation}
    \int_0^{T_{max}} \|\nabla u\|_{L^q}^p \diff t=+\infty.
\end{equation}
This means that if $T_{max}<+\infty,$ then for all $1 \leq \alpha< \frac{5}{2}, \alpha=\frac{1}{2}+\frac{2}{p},$
\begin{align}
    \int_0^{T_{max}} \|u\|_{\dot{H}^\alpha}^p \diff t
    &=
    \int_0^{T_{max}} \|\nabla u\|_{\dot{H}^{\alpha-1}}^p \diff t\\
    &\geq
    C \int_0^{T_{max}} \|\nabla u\|_{L^q}^p \diff t \\
    &=
    +\infty.
\end{align}
What is new in Corollary \ref{Cor2} is the term
\begin{equation}
     \left(1-\frac{\|u\|_{\dot{H}^{\alpha-1}}^4}
    {\|u\|_{\dot{H}^{\alpha-2}}^2 \|u\|_{\dot{H}^\alpha}^2}
    \right)^\frac{p}{2},
\end{equation}
which measures the deficit in the interpolation inequality for the embedding $\dot{H}^{\alpha-2} \cap \dot{H}^{\alpha}
\hookrightarrow \dot{H}^{\alpha-1},$ stated below.

\begin{proposition} \label{HilbertInterpolateProp}
For all $\alpha>\frac{1}{2},$ we have the embedding
$\dot{H}^{\alpha-2}\cap \dot{H}^\alpha 
\hookrightarrow \dot{H}^{\alpha-1},$
with for all $f \in \dot{H}^{\alpha-2}\cap \dot{H}^\alpha,$
\begin{equation}
    \|f\|_{\dot{H}^{\alpha-1}}^2\leq
    \|f\|_{\dot{H}^{\alpha-2}}\|f\|_{\dot{H}^{\alpha}}
\end{equation}
\end{proposition}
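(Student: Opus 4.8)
The plan is to pass to the Fourier side and reduce everything to a single application of the Cauchy--Schwarz inequality. Recall that for $s>-\frac32$ the homogeneous Sobolev norm is given by $\|f\|_{\dot H^s}^2=\int_{\mathbb R^3}(2\pi|\xi|)^{2s}\,\big|\hat f(\xi)\big|^2\diff\xi$; since $\alpha>\frac12$ forces $\alpha-2>-\frac32$, all three norms $\|f\|_{\dot H^{\alpha-2}}$, $\|f\|_{\dot H^{\alpha-1}}$, $\|f\|_{\dot H^{\alpha}}$ are meaningful via this formula. The first step is the elementary identity $2(\alpha-1)=(\alpha-2)+\alpha$, which lets us factor the Fourier weight as $(2\pi|\xi|)^{2(\alpha-1)}=(2\pi|\xi|)^{\alpha-2}(2\pi|\xi|)^{\alpha}$ and, writing $\big|\hat f(\xi)\big|^2=\big|\hat f(\xi)\big|\cdot\big|\hat f(\xi)\big|$, regroup the integrand defining $\|f\|_{\dot H^{\alpha-1}}^2$ as a product of $g(\xi)=(2\pi|\xi|)^{\alpha-2}\big|\hat f(\xi)\big|$ and $h(\xi)=(2\pi|\xi|)^{\alpha}\big|\hat f(\xi)\big|$.

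The second step is to apply Cauchy--Schwarz in $L^2(\mathbb R^3,\diff\xi)$ to $\int_{\mathbb R^3} g(\xi)h(\xi)\diff\xi$, which yields $\|f\|_{\dot H^{\alpha-1}}^2\le\|g\|_{L^2}\|h\|_{L^2}=\|f\|_{\dot H^{\alpha-2}}\|f\|_{\dot H^{\alpha}}$, exactly the asserted inequality with constant $1$. The embedding claim is then immediate, since the right-hand side is finite whenever $f\in\dot H^{\alpha-2}\cap\dot H^\alpha$, so such $f$ lies in $\dot H^{\alpha-1}$.

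There is essentially no obstacle: the proof is a one-line Cauchy--Schwarz once one is on the Fourier side. The only things worth a remark are (i) verifying that $\alpha>\frac12$ is precisely the hypothesis under which $\dot H^{\alpha-2}$ is one of the well-defined homogeneous Sobolev spaces introduced earlier, and (ii) observing — as already noted in the surrounding discussion — that equality in Cauchy--Schwarz would require $g$ and $h$ to be proportional, i.e.\ $\hat f$ supported on a sphere $\{4\pi^2|\xi|^2=\lambda\}$, which has measure zero, so a nonzero $f\in\dot H^{\alpha-1}$ can never attain the constant $1$; this justifies the claim in the text that the constant is sharp but not attained.
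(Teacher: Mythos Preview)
Your proof is correct and essentially identical to the paper's: both factor the weight via $2(\alpha-1)=(\alpha-2)+\alpha$ and apply Cauchy--Schwarz. The only cosmetic difference is that the paper writes this in operator form as $\left\langle(-\Delta)^{\frac{\alpha}{2}-1}f,(-\Delta)^{\frac{\alpha}{2}}f\right\rangle\le\|(-\Delta)^{\frac{\alpha}{2}-1}f\|_{L^2}\|(-\Delta)^{\frac{\alpha}{2}}f\|_{L^2}$ (calling it H\"older's inequality), while you work directly with the Fourier integral.
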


\begin{proof}
Fix $f \in \dot{H}^{\alpha-2}\cap \dot{H}^\alpha.$
Using the fact that $(-\Delta)^{\frac{1}{2}}$ is self-adjoint, and applying H\"older's inequality we find that
\begin{align}
    \|f\|_{\dot{H}^{\alpha-1}}^2
    &=
    \left\|(-\Delta)^{\frac{\alpha}{2}-\frac{1}{2}}
    f\right\|_{L^2}^2 \\
    &=
    \left<(-\Delta)^{\frac{\alpha}{2}-1}f,
    (-\Delta)^{\frac{\alpha}{2}}f\right> \\
    &\leq \label{Holder}
    \left\|(-\Delta)^{\frac{\alpha}{2}-1}f\right\|_{L^2}
    \left\|(-\Delta)^{\frac{\alpha}{2}}f\right\|_{L^2}\\
    &=
    \|f\|_{\dot{H}^{\alpha-2}}\|f\|_{\dot{H}^{\alpha}}.
\end{align}
This completes the proof.
\end{proof}

We will note that this interpolation inequality is related to eigenfunctions of the Laplacian because the only inequality in this proof is H\"older's inequality in \eqref{Holder}, which holds with equality if
\begin{equation}
    (-\Delta)^{\frac{\alpha}{2}}f
    =\lambda (-\Delta)^{\frac{\alpha}{2}-1}f,
\end{equation}
which in turn would imply that
\begin{equation}
    -\Delta f=\lambda f,
\end{equation}
and therefore that $f$ is an eigenfunction of the Laplacian. This can happen on the torus when working with $f\in \dot{H}^{\alpha-1}\left(\mathbb{T}^3\right),$ for example the function $f(x)=\sin(2\pi x_1),$ 
is an eigenfunction of the Laplacian in
$f\in \dot{H}^{\alpha-1}\left(\mathbb{R}^3\right),$
with $-\Delta f= 4 \pi^2 f,$ but not on the whole space.
Nonetheless, the sharp constant in Proposition \ref{HilbertInterpolateProp} 
is $1$ for
$\dot{H}^{\alpha-2}\left(\mathbb{R}^3\right)
\cap \dot{H}^\alpha\left(\mathbb{R}^3\right),$
even though, because are no eigenfunctions of the Laplacian in
$\dot{H}^{\alpha-2}\left(\mathbb{R}^3\right)
\cap \dot{H}^\alpha\left(\mathbb{R}^3\right),$
this constant is not attained.
We will prove this by considering functions whose Fourier transforms are supported on an annulus in $\mathbb{R}^3.$ 

\begin{proposition} \label{NarrowBands}
Suppose $u\in \dot{H}^{\alpha-2} \cap \dot{H}^{\alpha},$ with $u$ not identically zero, and
\begin{equation}
    \supp{ \hat{u}} \subset \left\{ \xi\in\mathbb{R}^3:
    R_1 \leq |\xi| \leq R_2 \right\}.
\end{equation}
Then 
\begin{equation}
    \frac{\|u\|_{\dot{H}^{\alpha-1}}^2}
    {\|u\|_{\dot{H}^{\alpha-2}} \|u\|_{\dot{H}^\alpha}}
    \geq
    \frac{R_1^2}{R_2^2}.
\end{equation}
This condition can be stated equivalently as
\begin{equation}
   1-\frac{\|u\|_{\dot{H}^{\alpha-1}}^4}
    {\|u\|_{\dot{H}^{\alpha-2}}^2 \|u\|_{\dot{H}^\alpha}^2}
    \leq 1-\frac{R_1^4}{R_2^4}.
\end{equation}
\end{proposition}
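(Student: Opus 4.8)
The plan is to work entirely on the Fourier side, where all three homogeneous Sobolev norms are weighted $L^2$ norms of $\hat u$ against powers of $|\xi|$. Writing $d\mu(\xi) = |\hat u(\xi)|^2\,d\xi$, which is a finite nonnegative measure supported in the annulus $\{R_1 \le |\xi| \le R_2\}$, and setting $w(\xi) = (2\pi|\xi|)^2$, I would express
\begin{align}
\|u\|_{\dot H^{\alpha-2}}^2 &= \int (2\pi|\xi|)^{2\alpha-4}\,d\mu = \int w^{\alpha-2}\,d\mu,\\
\|u\|_{\dot H^{\alpha-1}}^2 &= \int w^{\alpha-1}\,d\mu = \int w\cdot w^{\alpha-2}\,d\mu,\\
\|u\|_{\dot H^{\alpha}}^2 &= \int w^{\alpha}\,d\mu = \int w^2\cdot w^{\alpha-2}\,d\mu.
\end{align}
So if I introduce the probability measure $d\nu = w^{\alpha-2}\,d\mu \,/\, \|u\|_{\dot H^{\alpha-2}}^2$ (legitimate since $u$ is not identically zero, so the normalizing constant is positive and finite), then the quotient to be bounded is
\begin{equation}
\frac{\|u\|_{\dot H^{\alpha-1}}^4}{\|u\|_{\dot H^{\alpha-2}}^2\,\|u\|_{\dot H^{\alpha}}^2}
= \frac{\left(\int w\,d\nu\right)^2}{\int w^2\,d\nu}.
\end{equation}
This reduces the statement to a clean question about a single scalar random variable $X = w(\xi)$ under $\nu$, which takes values in $[R_1^2',R_2^2']$ where $R_i' = 2\pi R_i$ (the factors of $2\pi$ cancel in every ratio, so I will just take $w = |\xi|^2$ and the interval $[R_1^2, R_2^2]$ for cleanliness).

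The core inequality is then: for a random variable $X \in [a,b]$ with $0 < a \le b$, one has $(\mathbb{E}X)^2 / \mathbb{E}(X^2) \ge a/b$. The cleanest way I would prove this is to observe $X(b - X) \le 0$ is false in general — instead use that since $a \le X \le b$, we have $(X-a)(X-b) \le 0$, i.e. $X^2 \le (a+b)X - ab$, hence $\mathbb{E}(X^2) \le (a+b)\mathbb{E}X - ab$. It then suffices to check $(a+b)m - ab \le \frac{b}{a} m^2$ for $m = \mathbb{E}X \in [a,b]$, i.e. $\frac{b}{a}m^2 - (a+b)m + ab \ge 0$; but the left side factors as $\frac{b}{a}(m-a)(m-b) \cdot$... let me instead just note $\frac{b}{a}m^2 - (a+b)m + ab = \frac{1}{a}(bm - a\cdot?)$ — in any case this is a quadratic in $m$ with roots $m = a$ and $m = \frac{a^2}{b}\cdot\frac{b}{a}$... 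The roots are $a$ and $ab\cdot\frac{a}{b}\cdot\frac1a$; cleanest is to verify it vanishes at $m=a$ and at $m = \frac{a^2}{b}$, opens upward, and is therefore nonnegative on $[a,b]$ since $\frac{a^2}{b}\le a$. Combining, $\mathbb{E}(X^2) \le (a+b)m - ab \le \frac{b}{a}m^2$, which rearranges to $m^2/\mathbb{E}(X^2)\ge a/b$, i.e. $(\mathbb{E}X)^2/\mathbb{E}(X^2) \ge a/b = R_1^2/R_2^2$. This is exactly the claimed bound on the quotient.

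Finally I would translate back: squaring,
\begin{equation}
\frac{\|u\|_{\dot H^{\alpha-1}}^4}{\|u\|_{\dot H^{\alpha-2}}^2\,\|u\|_{\dot H^{\alpha}}^2} \ge \frac{R_1^4}{R_2^4},
\end{equation}
which upon subtracting from $1$ gives the equivalent form stated in the Proposition. I do not expect any serious obstacle here; the only points requiring care are (i) confirming the normalization constant $\|u\|_{\dot H^{\alpha-2}}^2$ is finite and nonzero (immediate from $u \in \dot H^{\alpha-2}\cap\dot H^\alpha$ nonzero, together with the support condition which in particular forces $\hat u$ off the origin so all three norms are comparable to $\|u\|_{\dot H^{\alpha-1}}$ on the annulus), and (ii) the elementary quadratic estimate $(X-a)(X-b)\le 0 \Rightarrow X^2 \le (a+b)X - ab$, which is where the two-sided support hypothesis $R_1 \le |\xi| \le R_2$ enters in an essential way. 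The mild subtlety worth a sentence is that this is a Cauchy–Schwarz deficit estimate: $(\mathbb{E}X)^2 \le \mathbb{E}(X^2)$ always, and the annulus support is precisely what upgrades it to the reverse bound with the explicit constant $R_1^2/R_2^2$.
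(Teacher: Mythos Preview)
Your reduction to a probability statement is sound and the overall route works, but it is considerably more elaborate than necessary, and there is a genuine slip in the quadratic step. The quadratic $Q(m) = \tfrac{b}{a}m^2 - (a+b)m + ab$ does \emph{not} vanish at $m=a$ (there $Q(a) = a(b-a) > 0$ when $a<b$) nor at $m=a^2/b$ (same value). What actually saves you is that the discriminant $(a+b)^2 - 4b^2 = (a-b)(a+3b)$ is nonpositive for $a\le b$, so $Q(m)\ge 0$ for all real $m$; with that correction the argument goes through. In fact you end up proving the \emph{stronger} bound $(\mathbb{E}X)^2/\mathbb{E}(X^2)\ge a/b$, whereas the proposition only asks for $a^2/b^2$; your ``squaring'' in the final step is not squaring at all but weakening $a/b$ to $a^2/b^2$.

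The paper's own proof bypasses all of this. It simply observes that on the annulus $|\xi|^2 \ge R_1^2$, hence
\[
\|u\|_{\dot H^{\alpha-1}}^2 \ge 4\pi^2 R_1^2\,\|u\|_{\dot H^{\alpha-2}}^2,
\]
and $|\xi|^4 \le R_2^4$, hence
\[
\|u\|_{\dot H^{\alpha}}^2 \le 16\pi^4 R_2^4\,\|u\|_{\dot H^{\alpha-2}}^2,
\]
and divides. In your probabilistic language this is just $\mathbb{E}(X)\ge a$ and $\mathbb{E}(X^2)\le b^2$, whence $(\mathbb{E}X)^2/\mathbb{E}(X^2)\ge a^2/b^2$ in one line---no need for $(X-a)(X-b)\le 0$ or any quadratic in $m$. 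Your sharper constant $a/b$ is a genuine improvement over the paper's $a^2/b^2$, but it costs a noticeably longer argument and the stated proposition does not require it.
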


\begin{proof}
First we will observe that 
\begin{equation} \label{SupportBound}
    \supp{ \hat{u}} \subset \left\{ \xi\in\mathbb{R}^3:
    R_1 \leq |\xi| \leq R_2 \right\}
\end{equation}
implies that
\begin{align}
\|u\|_{\dot{H}^{\alpha-1}}^2
&=
\int_{\mathbb{R}^3} 
\left(4 \pi^2 |\xi|^2\right)^{\alpha-1}
\left|\hat{u}(\xi)\right|^2 \diff\xi \\
&=
\int_{\mathbb{R}^3} 
4 \pi^2 |\xi|^2 \left(4 \pi^2 |\xi|^2\right)^{\alpha-2}
\left|\hat{u}(\xi)\right|^2 \diff\xi \\
&\geq
4 \pi^2 R_1^2 \int_{\mathbb{R}^3} 
\left(4 \pi^2 |\xi|^2\right)^{\alpha-2}
\left|\hat{u}(\xi)\right|^2 \diff\xi \\
&= \label{InnerBound}
4 \pi^2 R_1^2 \|u\|_{\dot{H}^{\alpha-2}}^2.
\end{align}

Likewise, the condition on the support of $\hat{u}$ \eqref{SupportBound} implies that
\begin{align}
    \|u\|_{\dot{H}^\alpha}^2
    &=
    \int_{\mathbb{R}^3} 
    \left(4 \pi^2 |\xi|^2\right)^{\alpha}
    \left|\hat{u}(\xi)\right|^2 \diff\xi \\
    &=
    \int_{\mathbb{R}^3}
    16 \pi^4|\xi|^4 \left(4 \pi^2 |\xi|^2\right)^{\alpha-2}
    \left|\hat{u}(\xi)\right|^2 \diff\xi \\
    &\leq
    16 \pi^4 R_2^4 \int_{\mathbb{R}^3}
    \left(4 \pi^2 |\xi|^2\right)^{\alpha-2}
    \left|\hat{u}(\xi)\right|^2 \diff\xi \\
    &= \label{OuterBound}
    16 \pi^4 R_2^4 \|u\|_{\dot{H}^{\alpha-2}}^2
\end{align}

Putting together \eqref{InnerBound} and \eqref{OuterBound}
we find that
\begin{align}
    \frac{\|u\|_{\dot{H}^{\alpha-1}}^4}
    {\|u\|_{\dot{H}^{\alpha-2}}^2\|u\|_{\dot{H}^{\alpha}}^2}
    &\geq
    \frac{16 \pi^4 R_1^4 \|u\|_{\dot{H}^{\alpha-2}}^4}
    {16 \pi^4 R_2^4 \|u\|_{\dot{H}^{\alpha-2}}^4}\\
    &=
    \frac{R_1^4}{R_2^4}.
\end{align}
It then immediately follows that
\begin{equation}
    \frac{\|u\|_{\dot{H}^{\alpha-1}}^2}
    {\|u\|_{\dot{H}^{\alpha-2}} \|u\|_{\dot{H}^\alpha}}
    \geq
    \frac{R_1^2}{R_2^2},
\end{equation}
and
\begin{equation}
    1-\frac{\|u\|_{\dot{H}^{\alpha-1}}^4}
    {\|u\|_{\dot{H}^{\alpha-2}}^2\|u\|_{\dot{H}^{\alpha}}^2}
    \leq
    1-\frac{R_1^4}{R_2^4}.
\end{equation}
This completes the proof.
\end{proof}
Proposition \ref{NarrowBands} shows that the sharp constant for the interpolation inequality in Proposition \ref{HilbertInterpolateProp} is in fact $1,$ and so the term
\begin{equation}
     \left(1-\frac{\|u\|_{\dot{H}^{\alpha-1}}^4}
    {\|u\|_{\dot{H}^{\alpha-2}}^2 \|u\|_{\dot{H}^\alpha}^2}
    \right)^\frac{p}{2}
\end{equation}
does indeed measure the deficit in this interpolation inequality.

Furthermore, Corollary \ref{Cor2} and Proposition \ref{NarrowBands} show that finite-time blowup solutions cannot concentrate on arbitrarily narrow bands in Fourier space, supported between an inner radius of $R_1(t)$ and an outer radius of $R_2(t),$ 
with the ratio $\frac{R_1(t)}{R_2(t)} \to 1$ 
arbitrarily quickly as $t \to T_{max}$ relative to the 
size of $\|u(\cdot,t)\|_{\dot{H}^\alpha}^p$.
In particular we will prove the following result, which is Corollary \ref{NarrowBandsRegCritIntro},
and is restated here for the reader's convenience.

\begin{corollary} \label{NarrowBandsRegCrit}
Suppose $u\in C\left(\left[0,T_{max}\right);H^1\right)$
is a mild solution of the Navier--Stokes equation,
and suppose for all $0<t<T_{max}$
\begin{equation}
    \supp{ \hat{u}(t)} \subset \left\{ \xi\in\mathbb{R}^3:
    R_1(t) \leq |\xi| \leq R_2(t) \right\}.
\end{equation}
Let $2\leq \alpha\leq \frac{5}{2}, 
\alpha=\frac{1}{2}+\frac{2}{p}.$ 
Then for all $0<t<T_{max}$
\begin{equation} 
    \|\nabla u(\cdot,t)\|_{L^2}^2\leq 
    \left\|\nabla u^0\right\|_{L^2}^2
    \exp\left(\Tilde{C}_\alpha \int_0^t
    \|u\|_{\dot{H}^\alpha}^p 
    \left(1-\frac{R_1(\tau)^4}{R_2(\tau))^4}
    \right)^\frac{p}{2} \diff\tau \right),
\end{equation}
where $\Tilde{C}_\alpha>0$ depends only on $\alpha.$
In particular, if $T_{max}<+\infty$ then
\begin{equation}
    \int_0^{T_{max}}
    \|u\|_{\dot{H}^\alpha}^p 
    \left(1-\frac{R_1(t)^4}{R_2(t))^4}
    \right)^\frac{p}{2} \diff t
    =+\infty.
\end{equation}
\end{corollary}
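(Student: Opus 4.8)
The plan is to obtain this statement as a direct consequence of Corollary~\ref{Cor2} together with the pointwise-in-time estimate furnished by Proposition~\ref{NarrowBands}. Since $u\in C\left(\left[0,T_{max}\right);H^1\right)$ is a mild solution and $2\leq\alpha\leq\frac{5}{2}$ with $\alpha=\frac{1}{2}+\frac{2}{p}$, the hypotheses of Corollary~\ref{Cor2} hold, so for all $0<t<T_{max}$ we already have
\begin{equation*}
    \|\nabla u(\cdot,t)\|_{L^2}^2\leq
    \left\|\nabla u^0\right\|_{L^2}^2
    \exp\left(\Tilde{C}_\alpha \int_0^t
    \|u\|_{\dot{H}^\alpha}^p
    \left(1-\frac{\|u\|_{\dot{H}^{\alpha-1}}^4}
    {\|u\|_{\dot{H}^{\alpha-2}}^2 \|u\|_{\dot{H}^\alpha}^2}
    \right)^{\frac{p}{2}} \diff\tau \right),
\end{equation*}
and it suffices to bound the integrand from above using the frequency-localization hypothesis.

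Next I would fix $0<\tau<T_{max}$ and apply Proposition~\ref{NarrowBands} to $u(\cdot,\tau)$, whose Fourier transform is supported in $\left\{\xi:R_1(\tau)\leq|\xi|\leq R_2(\tau)\right\}$; since $\alpha\geq 2$ and $\hat u(\cdot,\tau)$ is supported in a bounded annulus while $u(\cdot,\tau)\in L^2$, all of the homogeneous Sobolev norms in question are finite, so the proposition applies and gives
\begin{equation*}
    1-\frac{\|u\|_{\dot{H}^{\alpha-1}}^4}
    {\|u\|_{\dot{H}^{\alpha-2}}^2 \|u\|_{\dot{H}^\alpha}^2}
    \leq 1-\frac{R_1(\tau)^4}{R_2(\tau)^4}.
\end{equation*}
(If $u(\cdot,\tau)$ vanishes identically the solution is trivial and there is nothing to prove.)

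The remaining steps are pure monotonicity: because $p>0$, raising both sides to the power $\frac{p}{2}$ preserves the inequality, multiplying through by the nonnegative factor $\|u(\cdot,\tau)\|_{\dot{H}^\alpha}^p$ preserves it, integrating over $\tau\in(0,t)$ preserves it, and since $\exp$ is increasing, substituting into the bound from Corollary~\ref{Cor2} yields the claimed growth estimate. For the final assertion I would invoke, exactly as in the proof of Theorem~\ref{MainTheorem}, the fact that $T_{max}<+\infty$ forces $\|\nabla u(\cdot,t)\|_{L^2}\to+\infty$ as $t\to T_{max}$; by the growth estimate this can only happen if the time integral inside the exponential diverges.

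I do not expect any substantive obstacle here: all of the genuine analysis — the enstrophy identity of Lemma~\ref{GrowthID}, the fractional Sobolev inequality, the explicit minimizations of Propositions~\ref{InfCalc} and~\ref{InfCalc2}, and the annulus bound of Proposition~\ref{NarrowBands} — has already been carried out, and this corollary amounts to chaining those results together. The only point requiring a moment's care is checking finiteness of the homogeneous Sobolev norms in the degenerate cases $R_1(\tau)=0$ or $u(\cdot,\tau)\equiv 0$, both of which are handled at once by the compact annular support of $\hat u(\cdot,\tau)$ and the triviality of the zero solution.
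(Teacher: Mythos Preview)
Your proposal is correct and matches the paper's own proof, which simply states that the corollary follows immediately from Corollary~\ref{Cor2} and Proposition~\ref{NarrowBands}. You have merely spelled out the chaining of these two results in more detail than the paper does, including the harmless observation about finiteness of the relevant Sobolev norms under the annular support hypothesis.
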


\begin{proof}
This corollary follows immediately from Corollary \ref{Cor2} and Proposition \ref{NarrowBands}.
\end{proof}

\begin{remark}
We will note that for a solution $u$ of the Navier--Stokes equation satisfying the hypotheses of Corollary \ref{NarrowBandsRegCrit},
if $T_{max}<+\infty,$ then clearly 
\begin{equation}
    \lim_{t\to T_{max}} R_2(t)=+\infty,
\end{equation}
otherwise we will have
\begin{equation}
    \liminf_{t\to T_{max}}
    \|\nabla u(\cdot,t)\|_{L^2}
    \leq 2\pi\left\|u^0\right\|_{L^2}
    \liminf_{t\to T_{max}}R_2(t)<+\infty,
\end{equation}
which contradicts the assumption that $T_{max}<+\infty$.
This means that Corollary \ref{NarrowBandsRegCrit} rules out concentration arbitrarily quickly on narrow bands in Fourier space in the sense that 
$\frac{R_1(t)}{R_2(t)}\to 1$, but these bands are not necessarily narrow in the sense that
$R_2(t)-R_1(t) \to 0$,
because $R_2(t) \to +\infty$.
\end{remark}

\bibliographystyle{amsplain}
\bibliography{Bib}
\end{document}